\newtheorem{theorem}{Theorem}
\newtheorem{remark}[theorem]{Remark}
\newtheorem{claim}[theorem]{Claim}
\newtheorem{proposition}[theorem]{Proposition}
\newtheorem{lemma}[theorem]{Lemma}
\newtheorem{question}[theorem]{Question}
\newcommand*{\C}{\mathcal{C}}
\DeclareMathOperator{\vs}{vs}
\DeclareMathOperator{\ivs}{ivs}
\newcommand*{\ceil}[1]{\mathopen{}\left\lceil #1\right\rceil\mathclose{}}
\newcommand*{\bceil}[1]{\bigl\lceil #1\bigr\rceil}
\newcommand*{\abs}[1]{\lvert #1\rvert}
\newcommand{\ie}{i.e.\ }
\newcommand*{\ceilfrac}[2]{\mathopen{}\left\lceil\frac{#1}{#2}\right\rceil\mathclose{}}
\newenvironment{poc}{\begin{proof}[Proof of Claim]}{\end{proof}}
\title{When removing an independent set is optimal for reducing the chromatic number}
\author{
Stijn Cambie\thanks{Department of Computer Science, KU Leuven Campus Kulak Kortrijk, Belgium. E-mail: \texttt{stijn.cambie@hotmail.com}. Supported by the UK Research and Innovation Future Leaders Fellowship MR/S016325/1 and the Institute for Basic Science (IBS-R029-C4).}
\and John Haslegrave\thanks{Mathematical Institute, University of Oxford, UK. E-mail: \texttt{j.haslegrave@cantab.net}. Supported by the UK Research and Innovation Future Leaders Fellowship MR/S016325/1 and by the  European Research Council under the European Union's Horizon 2020 research and innovation programme (grant agreement no.\ 883810).}
\and
Ross J. Kang\thanks{Korteweg--de Vries Institute for Mathematics, University of Amsterdam, The Netherlands. 
Email: \texttt{r.kang@uva.nl}. Supported by a Vidi grant (639.032.614) of the Netherlands Organisation for Scientific Research (NWO).}
}
\date{\today}
\begin{document}

\maketitle

\begin{abstract}
    How large must the chromatic number of a graph be, in terms of the graph's maximum degree, to ensure that the most efficient way to reduce the chromatic number by removing vertices is to remove an independent set? By a reduction to a powerful, known stability form of Brooks' theorem, we answer this question precisely, determining the threshold to within two values (and indeed sometimes a unique value) for graphs of sufficiently large maximum degree. 
\end{abstract}

\section{Introduction}
Given a graph $G$, what is the minimum number of vertices one must remove in order to reduce its chromatic number? This natural invariant was introduced by Akbari et al.~\cite{ABKM22}, who called it the \textit{chromatic vertex stability number} of $G$, denoted $\vs_\chi(G)$, by analogy with the chromatic edge stability number defined by Staton \cite{Staton80}. There is a wide range of previous research on related concepts, such as $\chi$-critical graphs (see e.g.~\cite{BMR05}), reducing the chromatic number via other operations (see e.g.~\cite{King11}), or reducing other parameters by vertex deletion (see e.g.~\cite{PPR16}).

An obvious way to reduce the chromatic number is to start with an optimal colouring, and remove all vertices of one colour. This is equivalent to finding an independent set of vertices whose removal reduces the chromatic number, and the size of the smallest such set is the \textit{independent chromatic vertex stability number}, $\ivs_\chi(G)$. Akbari et al.~\cite{ABKM22} investigated under what circumstances we can guarantee that this is the most efficient approach, \ie that the chromatic vertex stability and independent chromatic vertex stability numbers are equal. 
They found that $\vs_\chi(G)=\ivs_\chi(G)$ for any graph $G$ with $\chi(G)\geq \Delta(G)$, but that there are examples $G$ for which $\vs_\chi(G)<\ivs_\chi(G)$ and $\chi(G)=\frac{\Delta(G)+1}{2}$. They defined the threshold function $f(\Delta)$ as the smallest quantity such that, for any graph $G$ of maximum degree $\Delta$, it must hold that $\vs_\chi(G)=\ivs_\chi(G)$ provided that $\chi(G)\geq f(\Delta)$. 
They asked the following, and felt the answer should be yes, \ie they expected their lower bound on $f$ to be best possible.

\begin{question}[{\cite[Problem 3.2]{ABKM22}}]\label{problem-half}
Is it true that $f(\Delta) \le \frac{\Delta}2+1$, \ie if $G$ is a graph with $\chi(G) \ge
\frac{\Delta(G)}2 + 1$, does it then always hold that $\vs_\chi(G) = \ivs_\chi(G)$?
\end{question}

We answer Question~\ref{problem-half} in the negative. Moreover, we settle the problem of determining $f(\Delta)$ in a strong sense. In fact, we show that it is the upper bound of Akbari et al.~which is close to best possible, in the sense that $f(\Delta)$ is always close to $\Delta$. Further, we determine $f(\Delta)$ almost exactly for every sufficiently large $\Delta$, showing that it takes one of at most two possible values. Our main results on $f$ are summarised as follows.
\begin{theorem}\label{thm:summary}
    As above, let $f(\Delta)$ be the smallest quantity such that, for any graph $G$ of maximum degree $\Delta$, it must hold that $\vs_\chi(G)=\ivs_\chi(G)$ provided that $\chi(G)\geq f(\Delta)$. Then, writing $k_{\Delta}=\left\lfloor \sqrt{\Delta+\frac14}-\frac 32 \right\rfloor$, the following statements hold.
    \begin{enumerate}
        \item $f(\Delta)=\Delta$ if $3\le\Delta\le10$.
        \item $f(\Delta) \ge \Delta+1-k_{\Delta}$ if $\Delta> 10$ and moreover there are infinitely many values for which $f(\Delta)\ge\Delta+2-k_{\Delta}$.
        \item $f(\Delta) \le \Delta+2-k_{\Delta}$ for all $\Delta$ sufficiently large.
    \end{enumerate}
\end{theorem}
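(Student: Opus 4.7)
My plan is to treat the three parts in sequence, each ultimately resting on a known stability version of Brooks' theorem which I will denote $(\ast)$: if $\chi(G) \ge \Delta(G) + 2 - t$ with $t$ at most about $\sqrt{\Delta}$, then $G$ must contain a clique on $\chi(G)$ vertices. The quantity $k_\Delta = \lfloor \sqrt{\Delta + 1/4} - 3/2 \rfloor$ is tuned precisely to match the largest value of $t$ for which $(\ast)$ forces such a clique. For part 1, the upper bound $f(\Delta) \le \Delta$ is already known from Akbari et al.; the matching lower bound is obtained by exhibiting, for each $\Delta \in \{3,\ldots,10\}$, a concrete small graph with $\Delta(G) = \Delta$, $\chi(G) = \Delta - 1$, and $\vs_\chi(G) < \ivs_\chi(G)$. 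Joins of small cliques with carefully chosen vertex-critical graphs are the natural candidates, and the small range of $\Delta$ lets me verify each case directly or by computer check.

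For part 3, I assume $\chi(G) \ge \Delta + 2 - k_\Delta$ and set $q := \chi(G)$. By $(\ast)$, $G$ contains a clique $K_q$ whose vertices have at most $\Delta - q + 1 \le k_\Delta - 1$ external neighbours each. Let $S$ be a minimum vertex set with $\chi(G - S) \le q - 1$. My task is to replace $S$ by an independent set $I$ of size at most $|S|$ with $\chi(G - I) \le q - 1$. The plan is to fix an optimal $(q-1)$-colouring $c$ of $G - S$ and then extend $c$ to $G$ by introducing a new colour $q$ used only on an independent set $I$ that absorbs every vertex of $S$ failing to fit into $\{1,\ldots,q-1\}$. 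Using $K_q$ as a structural reservoir (its rigid colour pattern in any $q$-colouring of $G$ forces sharp constraints on the colouring), a Kempe-chain or list-colouring swap argument should allow one to control $|I| \le |S|$. The resulting $q$-colouring of $G$ has a colour class of size at most $\vs_\chi(G)$, so $\ivs_\chi(G) \le \vs_\chi(G)$ and equality follows.

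For part 2, I construct witnesses of the lower bound. The idea is to take $G = K_a \vee H$ with $a$ and $H$ tuned so that $\chi(G) = \Delta - k_\Delta$ and $\Delta(G) = \Delta$, while $H$ is chosen to contain a small critical non-independent set yet admit no optimal colouring with a singleton class. The arithmetic underlying $k_\Delta$ emerges naturally from balancing the clique contribution $a$ against the degree budget $\Delta$ inside the join: a $K_q$-free host with the maximum possible $\chi$ for given $\Delta$ has order roughly quadratic in the deficit $\Delta - \chi$, which is exactly what produces the square-root threshold. For the infinitely-many-$\Delta$ strengthening $f(\Delta) \ge \Delta + 2 - k_\Delta$, one selects a slightly denser $H$ at the top end of each $k_\Delta$-interval so that one extra colour can be squeezed out of the construction, which is why the strengthening holds only along an arithmetic subfamily.

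The main obstacle, I expect, will be the swap argument in part 3: although $(\ast)$ provides a rigid clique of order $\chi(G)$, converting a general minimum vertex-reducing set $S$ into an independent set of the same size demands careful combinatorial bookkeeping, and this is where the exact threshold encoded by $k_\Delta$ becomes essential, because the small external degree $k_\Delta - 1$ of clique vertices is exactly what keeps the swap tractable. By contrast, the lower-bound constructions for part 2 should be reasonably explicit once the arithmetic is pinned down, and part 1 reduces to a finite case analysis of known vertex-critical graphs.
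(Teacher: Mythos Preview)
Your Part 3 plan has a genuine gap at its foundation. The statement you call $(\ast)$ --- that $\chi(G)\ge\Delta+2-k_\Delta$ forces a clique $K_{\chi(G)}$ --- is \emph{not} what the Molloy--Reed stability theorem says, and is not known. What Molloy--Reed actually gives is a $(\chi(G))$-critical subgraph of order at most $\Delta+1$; such a subgraph need not be a clique (there are $(c{+}1)$-critical graphs on $c+2$ or more vertices). So your ``structural reservoir'' $K_q$ may simply fail to exist.

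Even if one granted $(\ast)$, your argument still would not close. Reducing $\chi(G)$ requires destroying \emph{every} $(\chi(G))$-critical subgraph of $G$, and there may be many of them, potentially pairwise disjoint. Anchoring the whole swap argument to a single clique cannot control what happens at the other obstructions; your extension step ``absorb into a new colour $q$ the vertices of $S$ that fail to fit'' gives no reason those vertices should be independent, and the invocation of Kempe chains is not an argument. The paper proceeds quite differently: it first shows (via a double-counting argument using the low ``external degree'' $\Delta-c\le k_\Delta$) that the union of all small critical subgraphs breaks into connected components each of order below $\Delta+1+k_\Delta$, then selects from each component a set of $2k_\Delta$ vertices whose colours are unique in that component, and finally applies Haxell's independent-transversal theorem to extract an independent set of size $r$ (one vertex per component). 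Both the component-size bound and the appeal to Haxell are essential and absent from your outline.

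For Part 2, the paper's construction is not a join $K_a\vee H$; it is a central $K_\chi$ linked via $\lceil\sqrt{\chi}\rceil$ ``connector'' vertices to two copies of $K_{\chi-2}$, and the verification that $\vs_\chi=2<\ivs_\chi$ uses that structure specifically. Your description is too schematic to see whether it would produce the right parameters, and a pure join typically will not separate $\vs_\chi$ from $\ivs_\chi$ in the way needed. Part 1 is fine in spirit.
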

A schematic for our main, elementary lower-bound construction is given in Figure \ref{fig:constructionD-o(D)}.

\begin{figure}[ht]
    \centering
\begin{tikzpicture}[thick]
\foreach \x in {-4,0,4}{\draw (\x,-.25) circle (1.25);}

\foreach \x in {0,60,...,300}{
	\foreach \y in {60,120,180}{\draw (\x:0.75) -- (\x+\y:0.75);}}
\foreach \x in {-4,4}{
	\draw (\x-0.5,0.5) -- (\x+0.5,0.5) -- (\x-0.5,-0.5) -- (\x+0.5,-0.5) -- cycle;
	\draw (\x-0.5,0.5) -- (\x-0.5,-0.5);
	\draw (\x+0.5,0.5) -- (\x+0.5,-0.5);}

\foreach \w in {-2,2}{
	\foreach \x in {-1,0,1}{
		\foreach \y in {-0.5,0.5}{
			\foreach \z in {0.5,-0.5}{
				\draw[orange] (\w,\x) -- (2*\w+\y,\z);}}}}
\draw [orange] (120:0.75) -- (-2,1) to[out=0, in=150] (60:0.75);
\draw [orange] (180:0.75) -- (-2,0) -- (240:0.75);
\draw [orange] (0:0.75) -- (-2,-1) to[out=0, in=-150] (300:0.75);
\draw [orange] (60:0.75) -- (2,1) to[out=180, in=30] (120:0.75);
\draw [orange] (0:0.75) -- (2,0) -- (300:0.75);
\draw [orange] (180:0.75) -- (2,-1) to[out=180, in=-30] (240:0.75);

\foreach \x in {0,60,...,300}{\filldraw (\x:0.75) circle (0.075);}
\foreach \x in {-4,4}{
	\node[anchor=north] at (\x,-0.75) {$K_{\chi-2}$};
	\foreach \y in {-0.5,0.5}{
		\foreach \z in {-0.5,0.5}{\filldraw (\x+\y,\z) circle (0.075);}}}
\foreach \x in {-2,2}{
	\foreach \y in {-1,0,1}{\filldraw (\x,\y) circle (0.075);}}
\node[anchor=north] at (0,-0.75) {$K_\chi$};
\end{tikzpicture}
    \caption{Construction for Proposition~\ref{prop:chi+sqrtchi_bound}}
    \label{fig:constructionD-o(D)}
\end{figure}
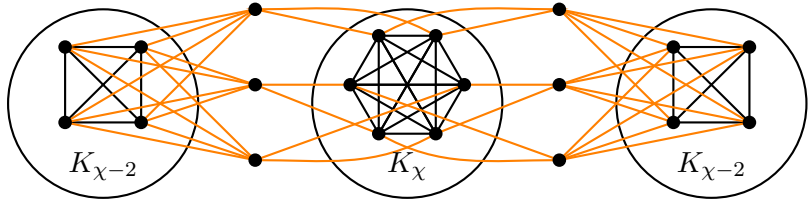

The proof of the upper bound of Akbari et al.\ makes use of Brooks' theorem, that for $\Delta\geq 3$ a graph $G$ of maximum degree $\Delta$ satisfying $\chi(G)=\Delta+1$ must contain a clique of order $\chi(G)$. In proving our upper bound, we use a deep result of Molloy and Reed \cite{MR14} which may be regarded as a stability version of Brooks' theorem: it says that for large $\Delta$ a chromatic number sufficiently close to this upper bound must also be caused by some small subgraph.

A natural approach to this problem, implicit in the lower-bound examples of Akbari et al., is to consider graphs $G$ where the essential difficulty in reducing the chromatic number lies in destroying all cliques of order $\chi(G)$, \ie $G$ for which $\omega(G)=\chi(G)$, the clique vertex stability number and chromatic vertex stability number are equal, but the independent clique vertex stability number is larger. Here we have defined the \textit{clique vertex stability number} $\vs_\omega(G)$ and \textit{independent clique vertex stability number} $\ivs_\omega(G)$ analogously, as the size of the smallest (independent) vertex set whose removal reduces the clique number of $G$. We give two general constructions based on this approach, both attaining $\chi(G)\approx \frac23\Delta(G)$; by a result of King~\cite{King11}, this is best possible. While our first construction of this form is simpler, the second is related to the satisfiability problem and so may be of independent interest.

We prove our upper bound in Section \ref{sec:preciseresult}, and give a construction attaining it (up to a difference of $1$) in Section \ref{sec:D=chi+sqrtchi}. We then consider the simultaneous clique and chromatic number version of the problem in Section \ref{sec:examples_with_vs_omega=vs_chi}, giving two essentially optimal constructions. We tie all the ends together for Theorem~\ref{thm:summary} in Section~\ref{sec:wrapup}.

\section{A precise upper bound for the threshold function}\label{sec:preciseresult}

Let $k_{\Delta}=\left\lfloor \sqrt{\Delta+\frac14}-\frac 32 \right\rfloor,$ \ie the maximum integer $k$ for which $(k+1)(k+2) \le \Delta$. In this section, we prove the following, which is our main result.
\begin{theorem}\label{thr:main}
    Let $G$ be a graph with sufficiently large maximum degree $\Delta$ and chromatic number $\chi(G)=c+1,$ for some $c \ge \Delta+1-k_{\Delta}.$
    Then $\vs_{\chi}(G)=\ivs_{\chi}(G)$.
\end{theorem}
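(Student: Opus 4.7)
The easy inequality $\vs_\chi(G)\le\ivs_\chi(G)$ is immediate from the definitions. My plan for the reverse is to reduce the problem to destroying all $K_{c+1}$-subgraphs of $G$, via the Molloy--Reed stability form of Brooks' theorem cited in the introduction.

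I would first apply Molloy--Reed to pass from $\chi$ to $\omega$. Since $\chi(G)=c+1\ge\Delta+2-k_\Delta$ and $\Delta$ is large, the theorem should force $K_{c+1}\subseteq G$; more importantly, applied to $G-S$ for any $S\subseteq V(G)$, it says that $\chi(G-S)\ge c+1$ forces a $K_{c+1}$ in $G-S$ --- the hypothesis $c+1\ge\Delta(G-S)+2-k_{\Delta(G-S)}$ transfers by monotonicity of $\Delta+2-k_\Delta$ in $\Delta$, and $\Delta(G-S)\ge c$ remains large. Consequently $\chi(G-S)\le c$ if and only if $G-S$ is $K_{c+1}$-free, identifying $\vs_\chi(G)$ (resp.\ $\ivs_\chi(G)$) with the minimum size of a (resp.\ independent) vertex set meeting every $K_{c+1}$ of $G$. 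Next, to control the $(c+1)$-clique structure: each vertex of such a clique has at least $c$ internal neighbours and so at most $\Delta-c\le k_\Delta-1$ external neighbours, which by a short degree count forces any two overlapping $(c+1)$-cliques to share at least $c+2-k_\Delta$ vertices, grouping them into clusters in which any two cliques are either vertex-disjoint or share most of their vertices.

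Finally, I would convert a minimum non-independent hitting set $S$ into an independent one of the same size by exchange. By minimality of $S$, each $v\in S$ has a witness clique $K_v$ hit only by $v$ inside $S$; if $u,v\in S$ are adjacent then $K_u\ne K_v$, $u\notin K_v$, $v\notin K_u$, and $v$ must be one of the at most $k_\Delta-1$ external neighbours of $u$. The plan is to replace $u$ by some $u'$ lying in every clique uniquely hit by $u$ and not adjacent to $S\setminus\{u\}$, exploiting that all such cliques share a large common core because they all contain $u$. The main obstacle is verifying that such a $u'$ exists and that the swap preserves the global hitting property, especially when neighbours of other $S$-vertices lie in the same cluster as $K_u$; this should follow from $c\ge\Delta+1-k_\Delta$ being much larger than $k_\Delta^2$ once $\Delta$ is sufficiently large, which guarantees enough alternatives to carry out a controlled, possibly simultaneous, exchange.
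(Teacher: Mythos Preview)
Your broad strategy---reduce via Molloy--Reed to a hitting problem, then exploit that any two intersecting obstructions overlap in almost all their vertices---is exactly the paper's. Two points, however, separate your sketch from a proof.

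First, the version of Molloy--Reed used here does \emph{not} force $K_{c+1}$: it yields a $(c+1)$-\emph{critical} subgraph $H$ with $\lvert H\rvert\le\Delta+1$. So the correct equivalence is that $\chi(G-S)\le c$ iff $S$ meets every $(c+1)$-critical subgraph of $G$ of order at most $\Delta+1$, not merely every $(c+1)$-clique. This is not fatal---every structural fact you use (minimum degree $\ge c$ inside the obstruction, at most $k_\Delta-1$ external neighbours, large pairwise overlap) holds verbatim for critical subgraphs---but the argument must be phrased for critical subgraphs, and you should check that your ``witness clique'' reasoning survives when the witnesses are arbitrary small critical graphs.

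Second, and more substantively, the exchange step is where the paper's argument and yours diverge, and your version is a genuine gap. The paper pushes the overlap observation further to a global structural statement (Proposition~\ref{prop:smallcomponents}): the union of all small $(c+1)$-critical subgraphs decomposes into connected components each of size $<\Delta+1+k_\Delta$, whence $\vs_\chi(G)$ equals the number $r$ of components. To produce an independent hitting set of size $r$, the paper then selects in each component a set of $2k_\Delta$ candidate vertices (those receiving a colour unique within that component), notes each has at most $k_\Delta-1$ neighbours outside its component, and applies Haxell's independent-transversal theorem with $k=k_\Delta$. Your local exchange (``replace $u$ by some $u'$ in the common core, not adjacent to $S\setminus\{u\}$'') is essentially an attempt to prove the needed instance of Haxell's theorem by hand. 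Such swap arguments are known to be delicate: a single swap may create new adjacencies inside $S$, there is no evident monovariant to guarantee termination, and ``possibly simultaneous'' exchanges require precisely the kind of global coordination that Haxell's theorem packages. The missing ingredients are the component-size bound and the appeal to Haxell.
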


We now proceed to the proof of Theorem \ref{thr:main}. We say that a graph $H$ is \textit{$(c+1)$-critical} if $\chi(H)=c+1$ and $\chi(H\setminus v)= c$ for every $v \in V(H)$.
Molloy and Reed~\cite{MR14} proved the following.

\begin{theorem}[\cite{MR14}]\label{thr:MR14}
    Let $G$ be a graph with sufficiently large maximum degree $\Delta$ and chromatic number $\chi(G)=c+1,$ for some $c \ge \Delta+1-k_{\Delta}.$
    Then $G$ contains a subgraph $H$ such that 
    \begin{itemize}
        \item $\lvert H \rvert \le \Delta+1$, and
        \item $H$ is $(c+1)$-critical.
    \end{itemize}
\end{theorem}
Note that it follows that every $(c+1)$-critical subgraph of $G$ has order at most $\Delta+1$, since otherwise applying the theorem to that subgraph gives a smaller $(c+1)$-critical subgraph, contradicting criticality.

We prove the following proposition.
\begin{proposition}\label{prop:smallcomponents}
    Let $G$ be a graph with sufficiently large maximum degree $\Delta$ and chromatic number $\chi(G)=c+1,$ for some $c \ge \Delta+1-k_{\Delta}.$
    Let $\C$ be a connected subgraph of $G$, which is the union of $(c+1)$-critical subgraphs $H_1,\ldots,H_r$ for some $r$.
    Then $\lvert \C \rvert \le \Delta+1+k_{\Delta}$.
\end{proposition}
\begin{proof}
    First we prove that the intersection of two $(c+1)$-critical subgraphs is large.
    \begin{claim}\label{clm:c-kneighb}
        Let $H_1$ and $H_2$ be two intersecting $(c+1)$-critical subgraphs of $G$.
        Then every vertex in $H_2 \setminus H_1$ has at least $c+1-k_{\Delta}$ neighbours in $H_1$.
    \end{claim}
    \begin{poc}
        A $(c+1)$-critical graph has minimum degree $\delta \ge c,$ since otherwise a $c$-colouring of the graph obtained by deleting a minimum-degree vertex may be extended to the whole graph.
        A vertex $v$ in $H_1 \cap H_2$ has at most $k_{\Delta}$ non-neighbours in $H_1$ and at most $k_\Delta$ non-neighbours in $H_2$. Together with the at most $\Delta$ neighbours of $v$ in $H_1 \cup H_2$, this implies that $\lvert H_1 \cup H_2 \rvert \le \Delta+1 + 2k_{\Delta},$ while $\lvert H_1 \rvert, \lvert H_2 \rvert \ge c+1.$
        Now $\lvert H_1 \cap H_2 \rvert = \lvert H_1 \rvert+ \lvert H_2 \rvert -\lvert H_1 \cup H_2 \rvert \ge \Delta+3-4 k_{\Delta}.$
        For the remainder of this proof, let $y=\lvert H_1 \cap H_2 \rvert$, $x=\lvert H_2 \setminus H_1 \rvert$ and $k=k_{\Delta}.$
        Note that the previous arguments give that $y> x$ provided that  $\Delta>8 k_{\Delta}$, which holds if $\Delta$ is sufficiently large.
        Since for every vertex $v \in H_1\cap H_2$ we have $\deg_{H_1}(v)\ge c$ and $\deg_G(v) \le \Delta$, there are at most $\Delta-c \le k$ neighbours of $v$ in $H_2 \setminus H_1$.
        On the other hand, every vertex in $H_2 \setminus H_1$ has at least $c-x$ neighbours in $H_2 \cap H_1$.
        Double counting of the edges between $H_2 \setminus H_1$ and $H_2 \cap H_1$ implies that
        $ky\ge x(c-x).$
        Since $x+y =\lvert H_2 \rvert \le \Delta +1\le c+k$, we have that 
        $k(c+k-x) \ge x(c-x)$, which is equivalent to
        $k^2\ge (x-k)(c-x).$
        Letting $g(x)=(x-k)(c-x)$ and noting that this is a concave parabola with $g(k+1)=g(c-1)=c-k-1>k^2$, we conclude that $x \le k$ (as $x\ge c$ is impossible for $\Delta$ sufficiently large).
        Since $\delta(H_2)\ge c$, the statement of the claim follows.
    \end{poc}
    Next we deduce that all the $(c+1)$-critical subgraphs in each component pairwise intersect.
        \begin{claim}\label{clm:intersect}
            The subgraphs $H_i$ and $H_j$ intersect for each $1\leq i,j\leq r$.
        \end{claim}
    \begin{poc}
    Suppose that some pair of subgraphs do not intersect. Since $\C$ is connected, there is a sequence of subgraphs such that every consecutive pair intersect but the first and last do not intersect. By choosing a minimal such sequence, we may assume $H_1$ and $H_3$ do not intersect but each intersect $H_2$. Choose $v\in H_2\cap H_3$. By Claim \ref{clm:c-kneighb}, $v$ has at least $c+1-k_{\Delta}$ neighbours in $H_1$, and since $H_3$ is $(c+1)$-critical it has at least $c$ neighbours in $H_3$. Thus $\deg_{\mathcal C}(v)\geq 2c+1-k_{\Delta}>\Delta,$ a contradiction.
    \end{poc}
    Now assume to the contrary that $\lvert \C \rvert \ge \Delta+2+k_{\Delta}$.
    Then we can pick $k_{\Delta}+1$ vertices in $\C$ not belonging to a fixed $(c+1)$-critical graph $H_1$, such that each of these $k_{\Delta}+1$ vertices belongs to some $(c+1)$-critical graph $H_i$ (different vertices may belong to different $H_i$), where each $H_i$ intersects $H_1$ by Claim \ref{clm:intersect}.
    By Claim~\ref{clm:c-kneighb} and the fact that $\delta(H_1)\ge c,$ the average degree of a vertex in $H_1$ has to be at least
    \[c+\frac{(k_{\Delta}+1)(\Delta-2k_{\Delta}+1)}{\Delta+1}\ge\Delta+1-k_{\Delta}+(k_{\Delta}+1)-\frac{2(k_{\Delta}+1)k_{\Delta}}{\Delta+1}>\Delta,\]
    which is a contradiction.
\end{proof}

We use the last proposition and the following result of Haxell~\cite{Hax01} to prove our main result.

\begin{theorem}[{\cite{Hax01}}]\label{thr:Haxell}
For a positive integer $k$, let $G$ be a graph with vertices partitioned
into $r$ cliques of size $\ge 2k$. If every vertex has at most $k$ neighbours outside its
own clique, then $G$ contains an independent set of size $r$.
\end{theorem}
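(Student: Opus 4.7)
This is Haxell's 2001 theorem on independent transversals, and I would prove it by an augmenting-swap argument of the kind Haxell introduced, powered by an extremal choice of partial transversal. Suppose for contradiction that no independent set meeting every clique exists. Take a partial independent transversal $T=\{t_1,\dots,t_m\}$ with $t_i\in V_i$ and $m$ maximum; by assumption $m<r$, and after relabelling we may assume $V_{m+1}$ is uncovered. For each $u\in V_{m+1}$, write $D(u)=\{i:t_iu\in E(G)\}$ for the set of \emph{blockers} of $u$ in $T$. Two bounds are immediate: $D(u)\neq\emptyset$ (else $T\cup\{u\}$ would extend $T$, contradicting maximality), and $|D(u)|\le k$ by the external-degree hypothesis; dually, each $t_i$ has at most $k$ neighbours in $V_{m+1}$.

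The core idea is a swap operation. Whenever some $u\in V_{m+1}$ has a unique blocker $t_i$, the set $T'=(T\setminus\{t_i\})\cup\{u\}$ is again a partial independent transversal of size $m$, now missing $V_i$ instead of $V_{m+1}$. Among all maximum partial transversals $T$, paired with a choice of uncovered clique $V_j$, I would fix an extremal pair minimising a suitable complexity measure---for instance, the number of edges between $T$ and $V_j$, or lexicographically the multiset of blocker counts $(|D(u)|)_{u\in V_j}$. Iterating the swap then either uncovers, at some stage, a vertex of the currently uncovered clique that has no blocker (augmenting the transversal and contradicting maximality of $m$), or it cycles; the extremal choice is calibrated to rule out cycling, because one swap step should strictly improve the complexity measure.

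The main obstacle is the subcase in which \emph{every} vertex of $V_{m+1}$ has at least two blockers, so no single-vertex swap is immediately available. Here I would use the clique structure and double counting: the inequalities $\sum_{u\in V_{m+1}}|D(u)|\le mk$ and $\sum_{u\in V_{m+1}}|D(u)|\ge 2|V_{m+1}|\ge 4k$ already force $m\ge 4$, and iterating this with minimal dominating subsets $S\subseteq T$ of $V_{m+1}$ (each vertex of which must be the unique $S$-blocker of some $u\in V_{m+1}$, by minimality of $|S|$) should produce an \emph{alternating chain} of candidate swaps which, combined with the extremal choice, eventually yields an augmentation. Making this chain argument fully rigorous---in particular, ensuring that the chain either augments or strictly improves the extremal quantity---is the delicate part of the proof. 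An alternative route, following Aharoni--Berger--Ziv, replaces the explicit chain bookkeeping by a topological fixed-point argument via Sperner's lemma or the Borsuk--Ulam theorem; both require that $|V_j|\ge 2k$ and the external-degree bound of $k$ act in concert, and indeed both hypotheses are tight.
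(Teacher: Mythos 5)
The paper does not prove this statement: it is quoted verbatim from Haxell's 2001 paper and invoked as a black box in the proof of the main theorem. There is therefore no in-paper proof to compare yours against, and the relevant question is simply whether your blind proposal constitutes a proof of Haxell's theorem.

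It does not, and you say so yourself. You correctly identify the general shape of Haxell's argument: fix a maximum partial independent transversal $T$, missing a class $V_{m+1}$; for each $u\in V_{m+1}$ the blocker set $D(u)$ is nonempty (else augment) and has size at most $k$; a single-vertex swap is available when some $u$ has a unique blocker; and one wants an extremal tie-break that rules out cycling. You also rightly note the topological alternative of Aharoni--Berger--Ziv and that the hypothesis $\lvert V_i\rvert\ge 2k$ is tight. But the core of the argument is missing. You do not name a concrete potential function that (a) strictly drops under some available move whenever augmentation is impossible and (b) admits no infinite descent, nor do you verify either property; you offer two candidate potentials in passing and postpone the check. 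The double count $mk\ge\sum_{u\in V_{m+1}}\lvert D(u)\rvert\ge 2\lvert V_{m+1}\rvert\ge 4k$ gives only $m\ge 4$, which is not a contradiction, and the "alternating chain of candidate swaps" that is supposed to finish the hard subcase (every $u$ has at least two blockers) is asserted rather than constructed. In Haxell's actual argument the difficulty is exactly here: one must grow a nested system of blocking vertices and frozen classes (an alternating tree rooted at $V_{m+1}$) and show, using $\lvert V_i\rvert\ge 2k$ at every level, that the process must terminate in an augmentation. Your text labels this step as "the delicate part" and leaves it open, so what you have is a plausible road map, not a proof.
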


\begin{proof}[Proof of Theorem \ref{thr:main}]
    By Theorem~\ref{thr:MR14}, in order to reduce the chromatic number it suffices to remove a vertex from every $(c+1)$-critical subgraph $H$ of size at most $\Delta+1$.
    By Proposition~\ref{prop:smallcomponents} we know the union of these form a subgraph $G'$ whose connected components are of small size, \ie each having at most $\Delta+1+k_\Delta\leq 2(c+1-k_{\Delta})$ vertices (since $\Delta$ is sufficiently large).
    If there are $r$ such components, we know that $\vs_{\chi}(G)\ge r.$
    Now, for an arbitrary $(c+1)$-colouring of $G$, every component $\C$ of $G'$ has at least $2 k_{\Delta}$ vertices that are coloured with a colour that appears exactly once on $\C,$ since all $c+1$ colours must be used on $\C$, and so otherwise it would have at least $2(c+1-(2 k_\Delta-1))+2 k_\Delta-1>2(c+1-k_\Delta)$ vertices, a contradiction.
    Every vertex in $\C$ has at least $c$ neighbours in the component and as such at most $\Delta-c \le k_{\Delta}-1$ neighbours outside of this component.
    For every $1\le i \le r$, we pick a vertex set $S_i$ consisting of $2 k_{\Delta}$ vertices in component $\C_i$, each of them coloured by a colour appearing only once on $\C_i$.
    Let $S= \cup_{i=1}^r S_i$ and consider the graph $G[S]$.
    Then by Theorem~\ref{thr:Haxell} (adding edges within each set $S_i$ if necessary) we can find an independent set $I$ of size $r$ in $G[S]$, containing one vertex $v_i$ of every $S_i$.
    Now $\chi(G \setminus I) =c=\chi(G)-1$ since no $(c+1)$-critical graph of size at most $\Delta+1$ is a subgraph of $G\setminus I.$
    This implies that $\ivs_{\chi}(G)\le r$ and more precisely we conclude that $\ivs_{\chi}(G)=\vs_{\chi}(G)=r.$
\end{proof}


\section{A corresponding lower bound construction}\label{sec:D=chi+sqrtchi}

\begin{proposition}\label{prop:chi+sqrtchi_bound}
    For every integer $\chi \ge 4,$ there exists a graph $G$ with $\omega(G)=\chi(G)=\chi$ and $\Delta=\chi+\max\left\{1, \ceilfrac{ \chi}{\ceil{\sqrt \chi}}  -2\right\}$ for which $\vs_\chi(G)<\ivs_\chi(G)$.
\end{proposition}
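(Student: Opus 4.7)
The plan is to generalise the construction sketched in Figure~\ref{fig:constructionD-o(D)}. Set $q=\lceil\sqrt{\chi}\rceil$ and $m=\lceil\chi/q\rceil$. Take a central clique $K$ on $\chi$ vertices; two peripheral cliques $L,R\cong K_{\chi-2}$; and a partition $V(K)=B_1^L\sqcup\cdots\sqcup B_q^L$ into blocks of size at most $m$, and independently $V(K)=B_1^R\sqcup\cdots\sqcup B_q^R$ (the two partitions may or may not coincide). On each side $S\in\{L,R\}$ introduce $q$ pairwise nonadjacent \emph{bridge} vertices $b_1^S,\ldots,b_q^S$, with $b_i^S$ adjacent to every vertex of $S$ and to every vertex of $B_i^S$; call the resulting graph $G$. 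A direct check gives $\omega(G)=\chi$ (any clique using a bridge either keeps at most $m<\chi-1$ central vertices or extends into $S$ to form $K_{\chi-1}$), and a short degree computation gives $\Delta(G)=\max\{\chi+1,\chi-2+m\}=\chi+\max\{1,m-2\}$ (attained at central vertices and at bridges with full-sized blocks). For $\chi(G)\le\chi$, paint $K$ rainbow with $[\chi]$; for each side $S$, paint $S$ with $[\chi]\setminus\{a,b\}$ where $a,b$ are colours of central vertices in two \emph{different} blocks of the $S$-partition; and paint each $b_i^S$ with whichever of $a,b$ is not used on $B_i^S$.

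To show $\vs_\chi(G)\le 2$, delete any two central vertices $c_1,c_2$: colour the surviving $K_{\chi-2}$ in $K$ and each peripheral clique with $\{1,\ldots,\chi-2\}$, and every bridge vertex with the single remaining colour $\chi-1$. Each bridge vertex's neighbourhood in $G\setminus\{c_1,c_2\}$ uses only colours in $\{1,\ldots,\chi-2\}$, so this is a proper $(\chi-1)$-colouring.

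The heart of the argument is $\ivs_\chi(G)\ge 3$: for every independent set $I$ of size at most $2$, $\chi(G\setminus I)=\chi$. If $I\cap V(K)=\emptyset$ then $K$ survives and we are done, so assume $I$ contains exactly one central vertex $c$ (as $K$ is complete) together with at most one further non-central vertex $v$. Towards a contradiction suppose $\phi\colon V(G\setminus I)\to[\chi-1]$ is a proper colouring. Then $K\setminus\{c\}\cong K_{\chi-1}$ uses every colour in $[\chi-1]$ exactly once. Since $v$ lies on at most one of the two sides, there is a side $S\in\{L,R\}$ meeting no vertex of $I$; the intact peripheral clique $S$ uses $\chi-2$ colours, missing some $\beta\in[\chi-1]$. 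For each $i$, the bridge $b_i^S$ is adjacent to every vertex of $S$, whose colours are exactly $[\chi-1]\setminus\{\beta\}$, so $\phi(b_i^S)=\beta$ is forced; since $b_i^S$ is also adjacent to every vertex of $B_i^S$, the unique central vertex $v_\beta\in V(K)\setminus\{c\}$ of colour $\beta$ cannot lie in $B_i^S$. Applying this over all $i$, we get $v_\beta\notin\bigcup_i B_i^S=V(K)$, contradicting $v_\beta\in V(K)$.

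Combining, $\vs_\chi(G)\le 2<3\le\ivs_\chi(G)$. The main obstacle is the lower bound $\ivs_\chi(G)\ge 3$: a second non-central vertex $v\in I$ lies on at most one side, so the ``missing-colour'' forcing on the other side's bridges remains active and produces the contradiction.
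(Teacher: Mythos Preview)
Your construction and argument are correct and essentially identical to the paper's: the same central $K_\chi$, two copies of $K_{\chi-2}$ joined via $q=\lceil\sqrt\chi\rceil$ independent bridge vertices to blocks of a partition of the central clique, with the same colouring to show $\chi(G)\le\chi$, the same two-vertex deletion for $\vs_\chi(G)\le 2$, and the same ``untouched side forces all bridges to one colour'' contradiction for $\ivs_\chi(G)\ge 3$. One small omission in your degree computation is the peripheral-clique vertices, which have degree $\chi-3+q$; the paper notes (and you should too) that $m\ge q-1$, so this is dominated by $\chi-2+m$ and does not affect $\Delta(G)$.
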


\begin{proof}
    We construct $G$ by first taking the disjoint union of a copy of $K_{\chi}$ and two copies of $K_{\chi-2}\vee aK_1$, where $a=\ceil{\sqrt \chi}$ and $\vee$ denotes the graph join (that is, the disjoint union of two graphs together with all edges between them). Denote the vertices of the two copies of $aK_1$ by $u_1,\ldots,u_a$ and $w_1,\ldots,w_a$.
    Then we partition the vertices of $K_{\chi}$ into $a$ non-empty subsets $V_1, V_2, \ldots, V_a$, each of them containing at most $\ceil{\frac{\chi}{a}}$ vertices, and for each $i$ add edges from $u_i$ to every vertex in $V_i$, and from $w_i$ to every vertex in $V_i$.
The construction is illustrated in Figure~\ref{fig:constructionD-o(D)}.
    We now verify that this construction works for all values of $\chi,$ by checking all of the promised conditions. 
    The $\omega(G)=\chi$ condition is a trivial one.
    
    \textbf{Condition $\chi(G)=\chi$}:
    Since $K_{\chi} \subset G,$ we have $\chi(G) \ge \chi.$
    In the other direction, first colour the copy of $K_\chi$ with $\chi$ colours.
    Now pick two colours $c_1$ appearing in $V_1$ and $c_2$ in $V_2$.
    All vertices in the copies of $aK_1$ can be coloured with either $c_1$ or $c_2$ (sometimes both are possible).
    The copies of $K_{\chi-2}$ can now be coloured with the $\chi-2$ colours different from $c_1$ and $c_2.$
    
    \textbf{Condition $\vs_\chi(G)=2$}:
    First we prove that $\chi(G\setminus v)=\chi$ for every $v \in G$.
    If $v$ does not belong to the copy of $K_\chi$, then $\chi \le \chi(G \setminus v) \le \chi(G)=\chi$.
    If $v \in K_\chi$, then in neither copy of $K_{\chi-2}\vee aK_1$ is a vertex removed. 
    If it were possible to colour $G \setminus v$ with $\chi-1$ colours, the vertices in each copy of $aK_1$ would be all coloured with one colour, being different from the ones used in the corresponding copy of $K_{\chi-2}$.
    But the central clique which has become a $K_{\chi-1}$ needs all $\chi-1$ colours as well, so at least one of the copies of $aK_1$ is adjacent to a vertex with the same colour, a contradiction. And so indeed $\chi(G\setminus v)=\chi$.
    This implies that $\vs_\chi(G)\ge 2.$
    
    For two vertices $u$ and $v$ from the central clique $K_\chi$, we have $\chi(G\setminus\{u,v\})=\chi-1.$
    For this, note that we can colour the two copies of $aK_1$ with one colour and the three copies of $K_{\chi -2}$ with the remaining $\chi-2$ other colours.
    So we conclude that $\vs_\chi(G)= 2.$
    
    \textbf{Condition $\ivs_\chi(G)>2$}:
    On the other hand, if we take an independent set $S$ of size $2$, we have $\chi(G\setminus S)=\chi$.
    If $S$ does not contain a vertex of the copy of $K_\chi$, this is clear.
    Otherwise, there is a copy of $aK_1\vee K_{\chi -2}$ from which no vertex is removed. As when verifying the condition $\vs_\chi(G)>1$, a $(\chi-1)$-colouring of this subgraph must use a single colour on the $aK_1$, making it impossible to colour the remaining $\chi-1$ vertices from the central clique.
    This implies that $\ivs_\chi(G)>2.$
    
    \textbf{Condition on the maximum degree}:
    There are three types of vertices.
    A vertex in a copy of $K_{\chi -2}$ has degree $\chi-3+a.$
    A vertex in a copy of $aK_1$ has maximum degree at most $(\chi -2)+\ceil{\frac{\chi}{a}}.$
    Since $\ceil{\frac{\chi}{a}}\ge a-1$, this is at least $\chi-3+a.$
    A vertex in the copy of $K_{\chi}$ has degree $(\chi-1)+2=\chi+1.$
    This concludes the proof.
\end{proof}

\begin{remark}
    Note that one can do the same construction with $\ceil{\frac{\chi}{a}} -1$ copies of $K_{\chi-2}\vee aK_1$.
    This has the same maximum degree, and now $\vs_\chi(G)=2$, while $\ivs_\chi(G)=\ceil{\frac{\chi}{a}}$. So even for 
    $\chi(G)\geq(1-o(1))\Delta(G)$, the difference between $\ivs_\chi(G)$ and $\vs_\chi(G)$ can be unbounded.
\end{remark}

\section{Relations with clique stability}\label{sec:examples_with_vs_omega=vs_chi}
In this section we give two essentially optimal constructions with $\omega(G)=\chi(G)$ where both the chromatic vertex stability and the clique vertex stability differ from their independent versions. Note that, while in general $\ivs_\omega(G)$ might not exist, the condition $\omega(G)=\chi(G)$ ensures that it does, since removing a colour class of an optimal colouring reduces the chromatic number and so must also reduce the clique number.

King~\cite[Theorem 3]{King11} proved that for every graph $G$ with $\omega(G)>\frac23 (\Delta(G)+1)$, there is an independent set $I$ for which $\omega(G \setminus I)<\omega(G)$. Although he did not explicitly state it, his construction clearly shows that there do not exist smaller vertex sets $S$ for which $\omega(G \setminus S)<\omega(G)$. So stated differently, he proved the following theorem.

\begin{theorem}[\cite{King11}]
    If a graph $G$ satisfies $\omega(G)>\frac23
    (\Delta(G)+1)$, then $\ivs_\omega(G)$ exists and equals $\vs_\omega(G)$.
\end{theorem}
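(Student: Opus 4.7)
The plan is to analyse the structure of the maximum cliques of $G$ under the density hypothesis, and then apply a Haxell-style transversal argument to produce an optimal independent hitting set.

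The structural engine is the observation that any two maximum cliques $C, C'$ sharing a vertex $v$ must overlap substantially: since $v$ has $\omega - 1$ neighbours in each of $C$ and $C'$ while having total degree at most $\Delta$, inclusion-exclusion on $N(v)$ yields
\[
|C \cap C'| \geq 2\omega - \Delta - 1.
\]
Define $C \sim C'$ to mean $C \cap C' \neq \emptyset$. A second application of the above bound, carried out inside $C'$, establishes transitivity: if $C \sim C' \sim C''$, then
\[
|C \cap C' \cap C''| \geq 2(2\omega - \Delta - 1) - \omega = 3\omega - 2\Delta - 2 > 0
\]
precisely by the hypothesis $\omega > \tfrac{2}{3}(\Delta+1)$. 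Hence $\sim$ is an equivalence relation, say with $r$ classes, and the corresponding \emph{clusters} $U_1, \ldots, U_r$ (unions of maximum cliques in each class) are pairwise vertex-disjoint in $V(G)$, since a shared vertex would lie in maximum cliques from two distinct classes.

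The lower bound $\vs_\omega(G) \geq r$ is then immediate: any $S$ with $\omega(G \setminus S) < \omega(G)$ must hit every maximum clique, and by disjointness of the clusters $S$ must contain a vertex from each $U_i$. For the matching upper bound on $\ivs_\omega(G)$, I would invoke Theorem~\ref{thr:Haxell}. Each cluster has size at least $\omega$ (containing a maximum clique), and every vertex in a cluster has at most $k := \Delta - \omega + 1$ neighbours outside it, since its $\omega - 1$ clique-neighbours already lie inside $U_i$. The hypothesis is exactly $\omega > 2k$, so after artificially completing each $U_i$ into a clique — which does not alter external degrees — Haxell delivers an independent transversal of the clusters of size $r$.

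The main obstacle, and where the argument demands the most care, is that a generic Haxell transversal picks one vertex per cluster but need not hit every maximum clique inside its cluster. To hit every max clique of $U_i$, the chosen representative must lie in the \emph{core} $K_i := \bigcap \{C \subseteq U_i : C \text{ is a maximum clique}\}$; one must therefore show, as a Helly-type consequence of the pairwise intersection bound together with the degree restriction (configurations with empty core force too-large cliques elsewhere), that each $K_i$ is non-empty and in fact large enough to sustain Haxell when the partition is refined to the cores. This is the delicate structural content of King's original argument, and once combined with our lower bound gives the chain $\ivs_\omega(G) \leq r \leq \vs_\omega(G) \leq \ivs_\omega(G)$, yielding the claimed equality.
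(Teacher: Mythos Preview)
The paper does not prove this statement; it is quoted from King~\cite{King11} with only the observation that King's independent hitting set for the maximum cliques is also of minimum size among all hitting sets. Your outline is indeed King's strategy: the intersection bound $|C\cap C'|\ge 2\omega-\Delta-1$, transitivity via the triple-intersection estimate $3\omega-2\Delta-2>0$, the partition of maximum cliques into pairwise vertex-disjoint clusters $U_1,\dots,U_r$, and the lower bound $\vs_\omega(G)\ge r$ are all correct.

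The step you defer, however, is the heart of the proof, and as sketched it would not reach the $\tfrac23$ threshold. Hajnal's lemma gives $|K_i|\ge 2\omega-|U_i|\ge 2\omega-\Delta-1$ (any vertex of $K_i$ is adjacent to all of $U_i$, forcing $|U_i|\le\Delta+1$), while its external degree into other cores is at most $\Delta-\omega+1$. Feeding these worst-case bounds into Theorem~\ref{thr:Haxell} with a single parameter $k=\Delta-\omega+1$ requires $2\omega-\Delta-1\ge 2(\Delta-\omega+1)$, \ie $\omega\ge\tfrac34(\Delta+1)$; the obstruction is that the cluster minimising $|K_i|$ need not be the one maximising external degree. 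King's actual argument uses a \emph{lopsided} independent-transversal theorem in which each part carries its own degree bound, and only then does the per-cluster inequality $2\omega-|U_i|\ge 2(\Delta+1-|U_i|)$, equivalent to $|U_i|\ge 2(\Delta+1-\omega)$, follow from $|U_i|\ge\omega>\tfrac23(\Delta+1)$. Identifying and invoking that stronger transversal tool is the substantive content you have left out.
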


This theorem is sharp for the clique-blow-up of $C_5$, \ie the graph $G_k$ obtained by substituting every vertex of a $5$-cycle with a clique of size $k$, with all edges between cliques corresponding to adjacent vertices.
In that case $\omega(G_k)=2k$, $\Delta(G_k)=3k-1$ and $\chi(G_k)\geq\ceilfrac{5k}{2}$. To see the latter, we may repeatedly use five colours to colour two vertices from each clique, and, if $k$ is odd, colour the final five vertices with three additional colours. Since no colour can be used more than twice, at least this many colours are required.
For this graph, $\ivs_\omega(G_k)$ is undefined, while $\vs_\omega(G_k)=3$.
On the other hand, $\vs_\chi(G_k)=\ivs_\chi(G_k)$ equals $1$ or $2$ depending on the parity of $k$.

We give two constructions for which $\chi(G)=\omega(G)$ and such that $\vs_\chi(G)<\ivs_\chi(G)$, provided $\chi(G)< \frac23 (\Delta(G)+1)$.
\begin{figure}[ht]
    \centering
    \begin{tikzpicture}[thick]
\draw (-5,-1) -- (-5,-3);
\draw (-5,-2) -- (-2,-2);
\draw (-2,-1) -- (-2,-3);
\filldraw (-4,-2) circle (0.075);
\filldraw (-3,-2) circle (0.075);
\filldraw (-5,-1) circle (0.075);
\filldraw (-5,-2) circle (0.075);
\filldraw (-5,-3) circle (0.075);
\filldraw (-2,-1) circle (0.075);
\filldraw (-2,-2) circle (0.075);
\filldraw (-2,-3) circle (0.075);

\filldraw (3,-1) circle (0.075);
\filldraw (2,-2) circle (0.075);
\filldraw (4,-2) circle (0.075);
\filldraw (3,-3) circle (0.075);
\draw (3,-1) -- (2,-2) -- (3,-3) -- (4,-2) -- cycle;
\draw (3,-1) -- (3,-3);
\draw (2,-2) -- (4,-2);

\filldraw (2.5,0) circle (0.075);
\filldraw (3.5,0) circle (0.075);
\draw (2.5,0) -- (3,-1) -- (3.5,0) -- cycle;
\filldraw (2.5,-4) circle (0.075);
\filldraw (3.5,-4) circle (0.075);
\draw (2.5,-4) -- (3,-3) -- (3.5,-4) -- cycle;
\foreach \x in {1.5,2.5,3.5,4.5}{
	\filldraw (\x,1) circle (0.075);
	\filldraw (\x,-5) circle (0.075);
	\draw (2.5,0) -- (\x,1) -- (3.5,0);
	\draw (2.5,-4) -- (\x,-5) -- (3.5,-4);}
\draw (1.5,1) -- (2.5,1);
\draw (3.5,1) -- (4.5,1);
\draw (1.5,-5) -- (2.5,-5);
\draw (3.5,-5) -- (4.5,-5);

\filldraw (1,-1.5) circle (0.075);
\filldraw (1,-2.5) circle (0.075);
\draw (1,-1.5) -- (2,-2) -- (1,-2.5) -- cycle;
\filldraw (5,-1.5) circle (0.075);
\filldraw (5,-2.5) circle (0.075);
\draw (5,-1.5) -- (4,-2) -- (5,-2.5) -- cycle;
\foreach \x in {-.5,-1.5,-2.5,-3.5}{
	\filldraw (0,\x) circle (0.075);
	\filldraw (6,\x) circle (0.075);
	\draw (1,-1.5) -- (0,\x) -- (1,-2.5);
	\draw (5,-1.5) -- (6,\x) -- (5,-2.5);}
\draw (0,-.5) -- (0,-1.5);
\draw (0,-2.5) -- (0,-3.5);
\draw (6,-.5) -- (6,-1.5);
\draw (6,-2.5) -- (6,-3.5);
\end{tikzpicture}
    \caption{The construction of Proposition~\ref{prop:constr1} for $\Delta=3$ and $\Delta=6$. In the former, the two central vertices form the $K_k$ and each is adjacent to one vertex in a copy of $H$; in the latter, the four central vertices form the $K_k$ and each is adjacent to two vertices in a copy of $H$.}
    \label{fig:G_builtfromKkandHs}
\end{figure}
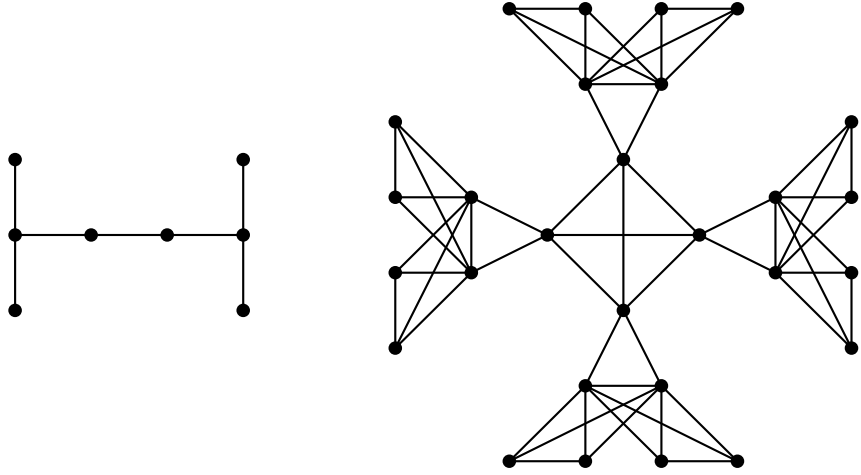

\begin{figure}[ht]
    \centering
\begin{tikzpicture}[thick]
\draw (0,0) circle (1);
\draw (18:2.5) -- (23:3) -- (13:3) -- cycle;
\draw (33:2.5) -- (30.5:3.5);
\foreach \x in {18,90,162,234,306}{
	\node at (\x:4.25) {$H$};
	\foreach \y in {72,144}{
		\draw (\x:0.75) -- (\x+\y:0.75);}
	\draw[orange] (\x:0.75) -- (\x:2.5);
	\filldraw (\x:2.5) circle (0.075);
	\foreach \y in {10,-10}{
		\draw (\x+\y:3) circle (1);
		\draw[orange] (\x:0.75) -- (\x+.5*\y:3);
		\filldraw (\x+1.5*\y:2.5) circle (0.075);
		\filldraw (\x+1.25*\y:3.5) circle (0.075);
		\filldraw (\x+.5*\y:3) circle (0.075);}
	\filldraw (\x:0.75) circle (0.075);}
\node at (0,-1.25) {$K_k$};
\node at (75:4.25) {$K_k$};
\node at (105:4.25) {$K_k$};
\node at (18:3.4) {$K_a$};
\node at (38:3) {$K_{k-a}$};
\draw (90:2.5) -- (95:3) -- (85:3) -- cycle;
\foreach \y in {10,-10}{
	\draw (90+1.5*\y:2.5) -- (90:2.5) -- (90+1.25*\y:3.5) -- cycle;
	\draw (90+1.5*\y:2.5) -- (95:3) -- (90+1.25*\y:3.5);
	\draw (90+1.5*\y:2.5) -- (85:3) -- (90+1.25*\y:3.5);}
\end{tikzpicture}
   \caption{Schematic of the construction used in Proposition~\ref{prop:constr1} }
    \label{fig:G_builtfromKkandHs_schematic}
\end{figure}
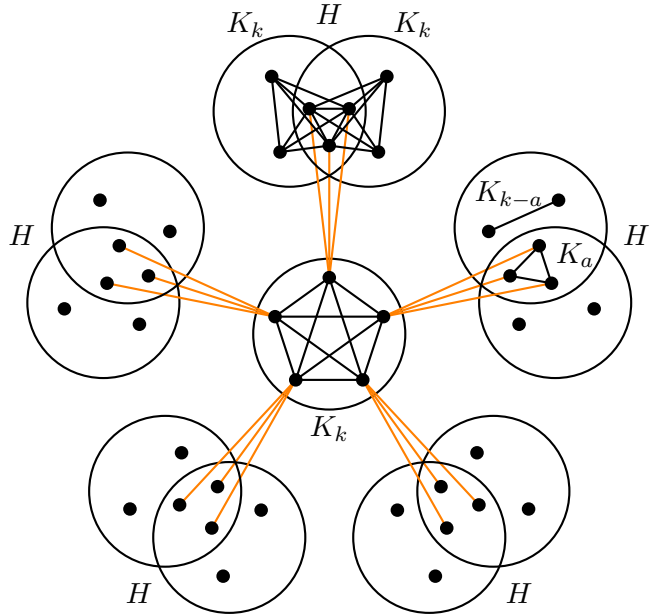

\begin{proposition}\label{prop:constr1}
    For every integer $\Delta \ge 2$, there exists a graph $G$ with $\Delta(G)=\Delta$ and $\omega(G)=\chi(G)=\bceil{\frac23 (\Delta+1)} -1$ such that $\vs_\chi(G)=\vs_\omega(G)<\ivs_\omega(G)=\ivs_\chi(G)$.
\end{proposition}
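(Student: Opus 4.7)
The construction generalises the two examples in Figure~\ref{fig:G_builtfromKkandHs}. Set $k=\lceil 2(\Delta+1)/3\rceil-1$. For $\Delta\ge 4$ I pick an integer $t\in[\max(2,2k-\Delta),\min(k-1,\Delta-k+1)]$ (a non-empty interval, since $k\le(2\Delta+1)/3$) and take $G$ to consist of a central $K_k$ on $v_1,\dots,v_k$ together with, at each apex $v_i$, a gadget $H_i$: the gadget has a $t$-set $B_i$ of ``bases'' forming $K_t$ all adjacent to $v_i$, and two ``leaf cliques'' $L_i^{(1)},L_i^{(2)}$, each a $K_{k-t}$, whose vertices are adjacent to every base in $B_i$ but non-adjacent to $v_i$ and to vertices of the other leaf clique. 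Each $B_i\cup L_i^{(\ell)}$ is then a $K_k$ inside $H_i$, and the two such copies share precisely $B_i$. For $\Delta=3$ (so $k=2$) I take $G$ to be the bipartite tree drawn on the left of the figure.

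First I verify $\omega(G)=\chi(G)=k$ and $\Delta(G)=\Delta$. The central clique and each $B_i\cup L_i^{(\ell)}$ are $K_k$'s; the disjointness of the two leaf cliques, together with $v_i$'s non-adjacency to them, prevents any $K_{k+1}$. A $k$-colouring is obtained by colouring the central clique with all $k$ colours and then, in each $H_i$, colouring $B_i$ with any $t$ colours different from $v_i$'s and both leaf cliques with the remaining $k-t$ colours. The apex has degree $(k-1)+t$ and a base has degree $(t-1)+1+2(k-t)=2k-t$; the stated interval for $t$ was chosen precisely so that both are at most $\Delta$ (the bound $2k-t\le\Delta$ being $t\ge 2k-\Delta$, and $(k-1)+t\le\Delta$ being $t\le\Delta-k+1$) and at least one equals $\Delta$.

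Next I compute $\vs_\omega=\vs_\chi=k+1$. Removing one central vertex $v_j$ together with one base from each gadget destroys every copy of $K_k$, and the $k$-colouring above becomes a proper $(k-1)$-colouring of $G\setminus S$ after freeing the single colour used only on the deleted set. The matching lower bound follows because the central clique and $B_i\cup L_i^{(1)}$ for each $i$ constitute $k+1$ pairwise vertex-disjoint copies of $K_k$.

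The key claim is the strict inequality $\vs_\omega<\ivs_\omega$. Any independent destroying set contains exactly one $v_j$ (it must hit the central $K_k$, and the $v_i$'s are pairwise adjacent); then $B_j\subseteq N(v_j)$ forbids every base of $H_j$, and the two copies $B_j\cup L_j^{(1)},B_j\cup L_j^{(2)}$ of $K_k$ must be destroyed using only leaves, requiring at least one vertex from each of the two pairwise-disjoint leaf cliques. Together with a base from each of the remaining $k-1$ gadgets (automatically independent from $v_j$ and from each other), this realises $\ivs_\omega\le k+2$; the argument just given shows the matching lower bound, so $\ivs_\omega=k+2>k+1=\vs_\omega$. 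The same set witnesses $\ivs_\chi=k+2$ by the colouring argument above, yielding the proposition. The main obstacle is the parameter choice in the first paragraph: one has to verify that the interval for $t$ is non-empty for every $\Delta\ge 4$, which reduces to $k\le(2\Delta+1)/3$, a consequence of the definition of $k$; the sporadic case $\Delta=3$ is handled directly by the bipartite tree.
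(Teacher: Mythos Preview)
Your construction and argument are essentially those of the paper: a central $K_k$ with, at each vertex $v_i$, a gadget consisting of two copies of $K_k$ glued along a common sub-clique $B_i$, with $v_i$ joined to all of $B_i$. The paper simply fixes $|B_i|=a:=2k-\Delta$; you allow a flexible overlap size $t$ but impose the extra constraint $t\ge 2$.

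That extra constraint creates a genuine gap in your claim that $\Delta(G)=\Delta$. The two relevant degrees are $k-1+t$ (apex) and $2k-t$ (base), so $\Delta(G)=\Delta$ forces $t\in\{2k-\Delta,\;\Delta-k+1\}$; for any $t$ strictly between these values the maximum degree is less than $\Delta$. Your sentence ``at least one equals $\Delta$'' is therefore not a consequence of $t$ lying in the interval you wrote down, only of $t$ being one of its \emph{unclipped} endpoints. Concretely, for $\Delta=5$ one has $k=3$, the interval $[\max(2,1),\min(2,3)]$ collapses to $\{2\}$, and with $t=2$ both degrees equal $4$, giving $\Delta(G)=4\ne 5$. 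The fix is immediate: drop the requirement $t\ge 2$ (nothing in your argument uses it) and take $t=2k-\Delta$, which is at least $1$ for every $\Delta\ge 3$; this is exactly the paper's choice $a$. With that change your proof is correct and, in fact, slightly more detailed than the paper's (you explicitly compute $\ivs_\omega=\ivs_\chi=k+2$ and exhibit a witnessing independent set, whereas the paper only argues the strict inequality).

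As a side remark, neither your argument nor the paper's handles $\Delta=2$ cleanly: there $k=1$, the graph degenerates to isolated vertices, and the asserted inequality fails.
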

\begin{proof}
    Let $k=\bceil{\frac23 (\Delta+1)} -1$ and let $a=2k-\Delta $.
    Let $H$ be the union of two $K_k$'s such that they have $a$ vertices in common, \ie the intersection of the two $K_k$'s is a $K_a$.
    Now construct $G$ as follows. We take $k$ disjoint copies of $H$ and one additional copy of $K_k$.
    Connect each vertex of $K_k$ to the $a$ vertices belonging to both $K_k$s of a different copy of $H$.
    The resulting graphs are presented for $\Delta \in \{3,6\}$ in Figure~\ref{fig:G_builtfromKkandHs}, or in a more schematic way in Figure~\ref{fig:G_builtfromKkandHs_schematic}.
    Observe that $G$ satisfies $\Delta(G)=\max\{2k-a,k-1+a\}=\Delta.$
    Since $k-a\ge 1$, it is also not hard to see that $\chi(G)=\omega(G)=k.$
    Furthermore we have $\vs_\chi(G)=k+1,$ since we need to remove at least one vertex of the central $K_k$ and one vertex of each copy of $H$.
    If we choose the vertex to be one of the $a$ vertices belonging to both $K_k$s in the construction of $H$, it is also clear that after removing the $k+1$ vertices we end with a graph with chromatic number equal to $k-1.$
    On the other hand $\ivs_\chi(G)=k+2,$ since we cannot take an independent set with precisely one vertex of the $K_k$ and every $K_a \subset H$. We can, however, reduce the chromatic number by removing an independent set consisting of one vertex from the $K_k$, one vertex from the $K_a$ in all but one copy of $H$, and two non-adjacent vertices in the final copy of $H$.
\end{proof}

Our second example is based on SAT. The motivation is the standard proof of the NP-hardness of the independent set problem, which uses a graph-based reduction from SAT. Given a SAT instance $\mathcal I$ with $k$ clauses, the independence graph $G(\mathcal I)$ consists of one vertex for each literal in each clause, with two vertices being adjacent if either they are in the same clause or they correspond to complementary literals. The instance $\mathcal I$ is satisfiable if and only if $G(\mathcal I)$ has an independent set of order $k$; see \cite{Karp}. A simple example is given in Figure \ref{fig:SAT-simple}. 

\begin{figure}[ht]
	\centering
	\begin{tikzpicture}[thick]
		\foreach \x in {0,3,6}{
			\draw (\x,0) -- (\x+1.5,0) -- (\x+.75,{.75*sqrt(3)}) -- cycle;
		}
		\draw (0.75,{.75*sqrt(3)}) -- (6.75,{.75*sqrt(3)});
		\draw (1.5,0) -- (3,0);
		\draw (4.5,0) -- (6,0);
		\draw (0,0) to [out=-20, in =-160] (7.5,0);
		\foreach \x in {0,3,4.5,7.5}{
			\filldraw (\x,0) circle (.075);
		}
		\filldraw[fill=white] (1.5,0) circle (.075);
		\filldraw[fill=white] (6,0) circle (.075);
		\filldraw (.75,{.75*sqrt(3)}) circle (.075);
		\filldraw[fill=white] (3.75,{.75*sqrt(3)}) circle (.075);
		\filldraw (6.75,{.75*sqrt(3)}) circle (.075);
		\node [anchor=south] at (.75,{.75*sqrt(3)}) {$a$};
		\node [anchor=south] at (3.75,{.75*sqrt(3)}) {$\overline{a}$};
		\node [anchor=south] at (6.75,{.75*sqrt(3)}) {$a$};
		\node [anchor=south east] at (0,0) {$b$};
		\node [anchor=south west] at (1.5,0) {$c$};
		\node [anchor=south east] at (3,0) {$\overline{c}$};
		\node [anchor=south west] at (4.5,0) {$d$};
		\node [anchor=south east] at (6,0) {$\overline{d}$};
		\node [anchor=south west] at (7.5,0) {$\overline{b}$};
	\end{tikzpicture}
\caption{The independence graph for the SAT instance $(a\vee b\vee c)\wedge(\overline{a}\vee\overline{c}\vee d)\wedge(a\vee\overline{d}\vee\overline{b})$. White vertices give an independent set of order $3$, corresponding to the satisfying assignments where $a=d=0$ and $c=1$.}
\label{fig:SAT-simple}
\end{figure}
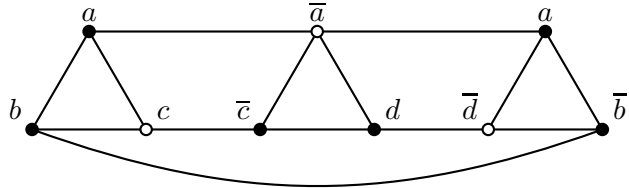

If all clauses have the same number $q$ of literals, then the independence graph consists of disjoint copies of $K_q$ with some edges between them, and the satisfiability of the instance depends on whether an independent set consisting of one vertex from each copy of $K_q$ can be selected. 
In order to relate this independent set problem to chromatic stability, we define the augmented independence graph $\widetilde G(\mathcal I)$, which is obtained from $G(\mathcal I)$ by adding two extra vertices for each clause, adjacent to all vertices from that clause, but not to each other. Thus each copy of $K_q$ is replaced by two copies of $K_{q+1}$ with $q$ vertices in common. The satisfiability of the instance determines whether the most efficient way to remove one vertex from each of these copies of $K_{q+1}$ is to remove an independent set.

The maximum degree of the augmented independence graph is determined by the largest number of times some literal occurs in the instance. We thus consider a restricted class of SAT where we control both the sizes of the cliques and the maximum degree. Define $p$-LIT $q$-SAT to be the class of SAT instances such that each clause contains exactly $q$ literals, and each literal appears at most $p$ times. Note that we permit the $q$ literals in a clause to contain the same literal more than once, up to the limit of $p$ total occurrences of that literal. The key question for our purposes is whether unsatisfiable instances exist.

\begin{proposition}\label{prop:SAT}For each $m\geq 2$ there exists an instance $\mathcal I$ of $m$-LIT $(2m-1)$-SAT which is not satisfiable and has $\chi(G(\mathcal I))=2m$.
\end{proposition}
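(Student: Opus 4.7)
The plan is to exhibit $\mathcal I$ explicitly, exploiting the following key observation. Suppose $\mathcal J$ is any $m$-LIT $(2m-1)$-SAT instance and $G(\mathcal J)$ admits a proper $(2m-1)$-colouring. Since each clause of $\mathcal J$ is a $(2m-1)$-clique in $G(\mathcal J)$, each colour class then contains exactly one literal-occurrence per clause; and as complementary literal-occurrences are adjacent in $G(\mathcal J)$, every colour class is a consistent set of literals. Setting all literals in such a class to True gives a satisfying partial assignment, extendable arbitrarily to an assignment of all variables. Contrapositively, if $\mathcal I$ is unsatisfiable then $\chi(G(\mathcal I))\ge 2m$, so it remains to construct an unsatisfiable $\mathcal I$ with also $\chi(G(\mathcal I))\le 2m$.

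For $m=2$, I would take $\mathcal I = \bigl\{(x\vee y\vee y),\ (x\vee\neg y\vee\neg y),\ (\neg x\vee z\vee z),\ (\neg x\vee\neg z\vee\neg z)\bigr\}$. Each literal appears exactly twice, so $\mathcal I$ is $2$-LIT; the first two clauses force $x=\text{True}$ while the last two force $x=\text{False}$, so $\mathcal I$ is unsatisfiable; and $G(\mathcal I)$ has maximum degree $4=2m$ without containing $K_5$, so Brooks' theorem gives $\chi(G(\mathcal I))\le 4=2m$, which combined with the observation above yields $\chi(G(\mathcal I))=2m$.

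For general $m\ge 3$, I would extend the above by replacing the single auxiliary $y$ by a short cascade of auxiliary variables, arranged so that several pairs of clauses together still force $x$, and similarly on the $\neg x$ side. The occurrences of $x$ (and of the auxiliaries) are distributed so that each literal appears at most $m$ times overall. The resulting $G(\mathcal I)$ then decomposes into small, well-structured components, and one can hope to exhibit an explicit $2m$-colouring by assigning colours in $\mathbb Z/2m\mathbb Z$ to positions within each clause, rotated consistently across clauses so that complementary literal-occurrences end up with distinct colours.

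The main obstacle is the upper bound $\chi(G(\mathcal I))\le 2m$ for $m\ge 3$: Brooks' theorem alone only gives $\chi(G(\mathcal I))\le \Delta(G(\mathcal I))\le 3m-2$, which is weaker than what we need. The construction of $\mathcal I$ must therefore be engineered so that $G(\mathcal I)$ has additional structure (such as small components or a high degree of symmetry) from which a direct $2m$-colouring can be read off. Calibrating this structure against the literal-occurrence cap $\le m$ while keeping $\mathcal I$ unsatisfiable is the principal bookkeeping challenge; the construction inevitably requires clauses with repeated literals, since for $m$ large every $m$-LIT $(2m-1)$-SAT instance whose clauses have $2m-1$ distinct literals is satisfiable by the Lovász Local Lemma.
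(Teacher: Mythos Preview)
Your lower-bound argument --- that unsatisfiability of $\mathcal I$ forces $\chi(G(\mathcal I))\ge 2m$, because any $(2m-1)$-colouring would make each colour class a satisfying transversal --- is correct, and in fact slicker than the paper's route (which instead locates an explicit $K_{2m}$ inside the final construction). Your $m=2$ instance is also correct and is, up to renaming, exactly the paper's construction for that case.

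The gap is precisely where you flag it: for $m\ge 3$ you only gesture at a ``cascade of auxiliary variables'' and say one can ``hope'' for a $2m$-colouring. This is the entire content of the proof, and the difficulty is real. You need to grow from a trivially unsatisfiable instance with short clauses up to clause length $2m-1$ while never letting the chromatic number exceed $2m$, and a generic cascade will not do this; you must say what structural invariant of the intermediate instances makes the colouring extend.

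The paper resolves this with a doubling construction. Set $r=\lceil\log_2 m\rceil$ and start from $\mathcal I_0$ consisting of the two clauses $a^{2m-2^r}$ and $\bar a^{\,2m-2^r}$. To pass from $\mathcal I_{i-1}$ to $\mathcal I_i$, replace each clause $x$ by a pair $x^+,x^-$: introduce a fresh variable $b_x$, split the literals of $x$ into two equal halves, and take $x^+=(b_x^{\,m},\text{first half})$, $x^-=(\bar b_x^{\,m},\text{second half})$. Unsatisfiability is preserved because satisfying both $x^+$ and $x^-$ forces one of the old halves of $x$ to be satisfied; clause length rises from $2m-2^{r-i+1}$ to $2m-2^{r-i}$, reaching $2m-1$ at step $r$.

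The idea your sketch is missing is the inductive colouring step. Given a $2m$-colouring of $G(\mathcal I_{i-1})$, the old literal-occurrences keep their colours (they induce a subgraph of $G(\mathcal I_{i-1})$), and the colour sets $C_x^+$, $C_x^-$ already used on the two halves of $x$ are \emph{disjoint}, since those halves came from a single clause of $\mathcal I_{i-1}$. This disjointness is exactly what lets you colour the $m$ copies of $b_x$ using $C_x^-$ plus unused colours, and the $m$ copies of $\bar b_x$ using $C_x^+$ plus the remaining unused colours, without conflict. Your proposal recognises that some ``additional structure'' is needed but does not supply it; the doubling-of-clauses mechanism, and the disjointness it guarantees, is that structure.
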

Note that this is tight, in the sense that every instance of $m$-LIT $2m$-SAT is satisfiable. To see this, draw a bipartite multigraph with vertices corresponding to variables and clauses, and one edge between a variable and clause for every occurrence (positive or negated) of that variable in that clause. Thus each vertex corresponding to a clause meets $2m$ edges, whereas each vertex corresponding to a variable meets at most $2m$ edges. Any set $S$ of clauses meets $2m\abs S$ edges, which must go to at least $\abs S$ different variables, so Hall's theorem allows us to select a different variable adjacent to each clause, and hence to select one literal from each clause no two of which are complementary, yielding a satisfying assignment. (The argument above is due to Tovey \cite[Theorem 2.4]{Tov84}.)
\begin{proof}[Proof of Proposition \ref{prop:SAT}]Let $r=\ceil{\log_2m}$. We will inductively construct a sequence $\mathcal I_0,\ldots,\mathcal I_r$ such that $\mathcal{I}_i$ is an instance of $m$-LIT $(2m-2^{r-i})$-SAT which is not satisfiable and has $\chi(G(\mathcal I_i))\leq 2m$. Our construction will also ensure that $G(\mathcal I_i)$ contains a clique of order $2m$ for every $i\geq 1$. Thus $\mathcal I_r$ will satisfy the requirements of the theorem.

Take $\mathcal I_0$ to consist of two clauses, one consisting of $2m-2^r$ copies of $a$ and the other of $2m-2^r$ copies of $\overline a$. By definition of $r$ we have $2^{r-1}<m$, and so the clauses are non-empty and it is impossible to satisfy both of them. Each literal occurs $2m-2^r\leq m$ times, and $G(\mathcal I_0)$ is a clique on $4m-2^{r+1}\leq 2m$ vertices, so $\mathcal I_0$ has the required properties.

Now, for each $i=1,\ldots,r$, construct $\mathcal I_i$ from $\mathcal I_{i-1}$ as follows. For each clause $x$ of $\mathcal I_{i-1}$ we create two clauses $x^+,x^-$ of $\mathcal I_i$. To do this, choose a new variable $b_x$, and separate the $2m-2^{r-i+1}$ literals appearing in $x$ (arbitrarily) into two groups of $m-2^{r-i}$. Set $x^+$ to consist of $m$ copies of $b_x$ together with the first group, and set $x^-$ to consist of $m$ copies of $\overline{b_x}$ together with the second group. Since each literal from $\mathcal I_{i-1}$ appears the same number of times in $\mathcal I_i$, and each new literal appears $m$ times, it is indeed an instance of $m$-LIT $(2m-2^{r-i})$-SAT.

Suppose that a truth assignment satisfies $\mathcal I_i$. Then for each clause $x$ of $\mathcal I_{i-1}$, it satisfies both $x^+$ and $x^-$ and therefore must satisfy $x$. Since $\mathcal I_{i-1}$ was not satisfiable, neither is $\mathcal I_i$.

Finally, we show that $\chi(G(\mathcal I_i))\leq 2m$; since the graph contains a copy of $K_{2m}$ for every new variable $b_x$, we must also have equality. Take a colouring of $G(\mathcal I_{i-1})$ with at most $2m$ colours. Note that the subgraph of $G(\mathcal I_i)$ induced by literals appearing in $\mathcal I_{i-1}$ is a subgraph of $G(\mathcal I_{i-1})$, since any two such literals appearing in the same clause of $\mathcal I_i$ came from the same clause of $\mathcal I_{i-1}$. Consequently this is a proper partial colouring of $G(\mathcal I_i)$. We extend it to the new literals. For each pair $x^+,x^-$, write $C_x^+$ (respectively $C_x^-$) for the set of colours already used on $x^+$ (respectively $x^-$). Note that, since we started from a proper colouring of $G(\mathcal I_{i-1})$, we have $C_x^+\cap C_x^-=\varnothing$, and $\abs{C_x^+}=\abs{C_x^-}=m-2^{r-i}$. Assign the colours in $C_x^+$ to $m-2^{r-i}$ of the $\overline{b_x}$ literals, and the colours in $C_x^-$ to $m-2^{r-i}$ of the $b_x$ literals. Since the two sets were disjoint, this is still a proper partial colouring. Assign the $2m-2(m-2^{r-i})=2^{r-i+1}$ colours in $[2m]\setminus(C_x^+\cup C_x^-)$ to the $2^{r-i+1}$ remaining copies of $b_x$ and $\overline{b_x}$. Since each of these only has neighbours in $x^+$ or $x^-$, this gives a proper colouring of $G(\mathcal I_i)$.
\end{proof}
For example, with $m=4$ (and so $r=2$), the construction might proceed 
\begin{align*}\mathcal{I}_0&=(a\vee a\vee a\vee a)\wedge(\overline a\vee\overline  a\vee\overline  a\vee\overline  a),\\
\mathcal{I}_1&=(b\vee b\vee b\vee b\vee a\vee a)\wedge(\overline b\vee\overline b\vee\overline b\vee\overline b\vee a\vee a)\\
&\phantom{{}={}}\wedge(c\vee c\vee c\vee c\vee\overline a\vee\overline a)\wedge(\overline c\vee\overline c\vee\overline c\vee\overline c\vee\overline a\vee\overline a),\\
\mathcal{I}_2&=(d\vee d\vee d\vee d\vee b\vee b\vee a)\wedge(\overline d\vee\overline d\vee\overline d\vee\overline d\vee b\vee b\vee a)\\
&\phantom{{}={}}\wedge(e\vee e\vee e\vee e\vee \overline b\vee\overline b\vee a)\wedge(\overline e\vee\overline e\vee\overline e\vee\overline e\vee \overline b\vee\overline b\vee a)\\
&\phantom{{}={}}\wedge(f\vee f\vee f\vee f\vee c\vee c\vee \overline a)\wedge(\overline f\vee\overline f\vee\overline f\vee\overline f\vee c\vee c\vee \overline a)\\
&\phantom{{}={}}\wedge(g\vee g\vee g\vee g\vee \overline c\vee\overline c\vee\overline a)\wedge(\overline g\vee\overline g\vee\overline g\vee\overline g\vee \overline c\vee\overline c\vee\overline a).
\end{align*}
Note, however, that the splitting of each clause is arbitrary, and while at the first step there is only one way to do this, subsequently there are genuine options producing non-isomorphic constructions. The independence graph for this example is shown in Figure \ref{fig:long-SAT}.

\begin{figure}[ht]
\centering
\begin{tikzpicture}[thick]
\foreach \x in {0,3,7,10}{
\filldraw[draw=gray!75,fill=gray!75] ($(-110:1)+(\x,0)$) -- ($(110:1)+(\x,-3)$) -- ($(70:1)+(\x,-3)$) -- ($(-70:1)+(\x,0)$) -- cycle;
}
\foreach \x in {0,7}{
\filldraw[draw=gray!75,fill=gray!75] ($(-110:1)+(\x,0)$) -- ($(110:1)+(\x+3,-3)$) -- ($(70:1)+(\x+3,-3)$) -- ($(-70:1)+(\x,0)$) -- cycle;
\filldraw[draw=gray!75,fill=gray!75] ($(110:1)+(\x,-3)$) -- ($(-110:1)+(\x+3,0)$) -- ($(-70:1)+(\x+3,0)$) -- ($(70:1)+(\x,-3)$) -- cycle;
\filldraw[fill=gray!75,draw=gray!75] ($(90:1)+(\x,0)$) -- ($(90:1)+(\x+3,0)$) -- ($(-150:1)+(\x+3,0)$) -- ($(-30:1)+(\x,0)$) -- cycle;
\filldraw[fill=gray!75,draw=gray!75] ($(-90:1)+(\x,-3)$) -- ($(-90:1)+(\x+3,-3)$) -- ($(150:1)+(\x+3,-3)$) -- ($(30:1)+(\x,-3)$) -- cycle;
\filldraw[fill=gray!75] ($(90:1)+(\x,0)$) -- ($(170:1)+(\x,0)$) -- ($(-110:1)+(\x,0)$) -- ($(-70:1)+(\x,0)$) -- ($(-30:1)+(\x,0)$) -- ($(10:1)+(\x,0)$) -- ($(50:1)+(\x,0)$) -- cycle;
\filldraw[fill=gray!75] ($(90:1)+(\x+3,0)$) -- ($(10:1)+(\x+3,0)$) -- ($(-70:1)+(\x+3,0)$) -- ($(-110:1)+(\x+3,0)$) -- ($(-150:1)+(\x+3,0)$) -- ($(170:1)+(\x+3,0)$) -- ($(130:1)+(\x+3,0)$) -- cycle;
\filldraw[fill=gray!75] ($(-90:1)+(\x,-3)$) -- ($(-170:1)+(\x,-3)$) -- ($(110:1)+(\x,-3)$) -- ($(70:1)+(\x,-3)$) -- ($(30:1)+(\x,-3)$) -- ($(-10:1)+(\x,-3)$) -- ($(-50:1)+(\x,-3)$) -- cycle;
\filldraw[fill=gray!75] ($(-90:1)+(\x+3,-3)$) -- ($(-10:1)+(\x+3,-3)$) -- ($(70:1)+(\x+3,-3)$) -- ($(110:1)+(\x+3,-3)$) -- ($(150:1)+(\x+3,-3)$) -- ($(-170:1)+(\x+3,-3)$) -- ($(-130:1)+(\x+3,-3)$) -- cycle;
}
\draw[dashed] ($(10:1)+(3,0)$) -- ($(170:1)+(7,0)$) -- ($(-10:1)+(3,-3)$) -- ($(-170:1)+(7,-3)$) -- cycle;
\draw[dashed] ($(10:1)+(3,0)$) to[out=30, in=150] ($(10:1)+(10,0)$) -- ($(-10:1)+(3,-3)$) to[out=-30, in=-150] ($(-10:1)+(10,-3)$) -- cycle;
\draw[dashed] ($(170:1)+(0,0)$) to[out=30, in=150] ($(170:1)+(7,0)$) -- ($(-170:1)+(0,-3)$) to[out=-30, in=-150] ($(-170:1)+(7,-3)$) -- cycle;
\draw[dashed] ($(170:1)+(0,0)$) to[out=30, in=150] ($(10:1)+(10,0)$) -- ($(-170:1)+(0,-3)$) to[out=-30, in=-150] ($(-10:1)+(10,-3)$) -- cycle;
\end{tikzpicture}
\caption{The (unaugmented) independence graph $G(\mathcal I_2)$ for the example $4$-LIT $7$-SAT instance $\mathcal I_2$ given after the proof of Proposition \ref{prop:SAT}. Each shaded heptagon represents a clique corresponding to a clause, and other shaded sections are complete bipartite subgraphs. The dashed edges connect vertices corresponding to $a$ and $\overline{a}$.}
\label{fig:long-SAT}
\end{figure}
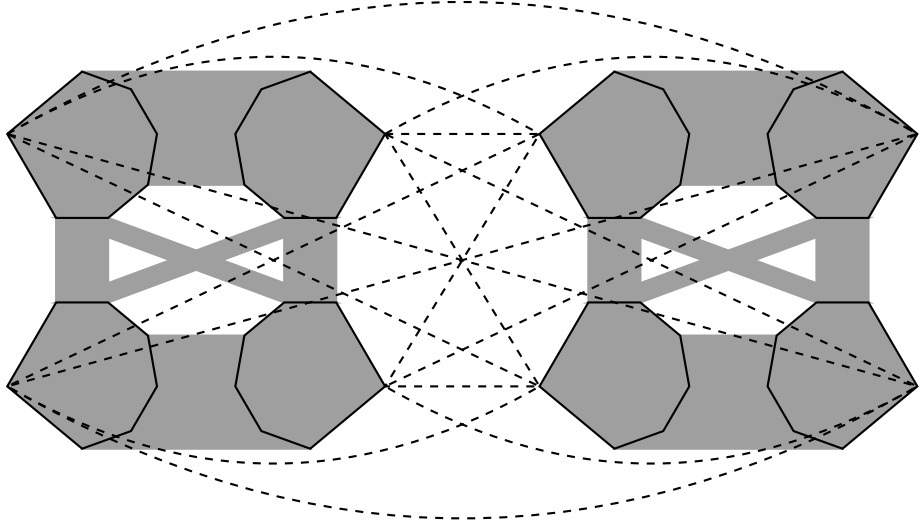
We next show that we may reduce the chromatic number of $G(\mathcal I)$ by removing one vertex from each clause. 
\begin{lemma}\label{lem:removal}Let $G$ be a $k$-chromatic graph and let $P$ be a partition of $V(G)$ such that each part induces a clique. Then we can choose a set $S$ consisting of one vertex from each part such that $G-S$ is $(k-1)$-colourable.\end{lemma}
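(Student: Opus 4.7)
The plan is to realise the required set $S$ as the extension of a single colour class of an arbitrary optimal colouring. I would begin by fixing a proper $k$-colouring of $G$ with colour classes $V_1,\ldots,V_k$, and then focus on a distinguished class, say $V_1$, which I intend to include entirely in $S$.

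The key structural observation is that, because every part $C\in P$ induces a clique while every colour class $V_i$ is independent, we have $\lvert V_i\cap C\rvert\le 1$ for every part $C$ and every index $i$. Applied to $i=1$: each part of $P$ contains at most one vertex of $V_1$. I would then construct $S$ by taking all of $V_1$ and, for each part $C\in P$ with $C\cap V_1=\varnothing$, throwing in one arbitrary vertex of $C$. The parts that already meet $V_1$ contribute their unique $V_1$-vertex and nothing more, while the parts disjoint from $V_1$ contribute precisely one chosen vertex; hence $\lvert S\cap C\rvert=1$ for every part $C$, so $S$ is a transversal of $P$.

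For the colouring condition, I would note that $V_1\subseteq S$, so the restriction of the original $k$-colouring to $V(G)\setminus S$ is a proper colouring that avoids colour $1$ altogether. It therefore uses at most the $k-1$ colours $\{2,\ldots,k\}$, witnessing $\chi(G-S)\le k-1$, as required.

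There is essentially no obstacle: the clique structure of the parts automatically enforces the ``at most one per class'' property of any proper colouring, so completing any one colour class to a transversal of $P$ is immediate and kills the chosen colour.
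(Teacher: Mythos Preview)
Your argument is correct and follows essentially the same approach as the paper: fix an optimal colouring and arrange for $S$ to contain an entire colour class, which is possible because each clique-part meets each colour class at most once. The paper's version is marginally slicker in that it simply picks the highest-coloured vertex from each part (thus automatically covering colour class $k$), but the underlying idea is identical.
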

\begin{proof}Colour $G$ with $k$ colours, and choose a highest-coloured vertex from each part to make up $S$. Since each part has at most one vertex of colour $k$, every vertex of colour $k$ is in $S$, so at most $k-1$ colours are used on $G-S$.\end{proof}
Finally, we show that the augmented independence graph has the required properties. An example for $2$-LIT $3$-SAT is shown in Figure \ref{fig:SAT}.
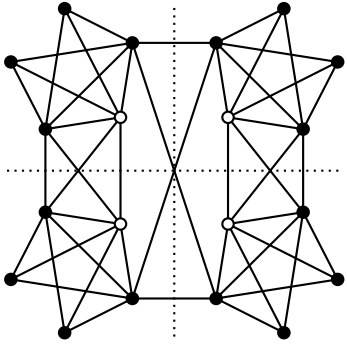
\begin{figure}[ht]
\centering
\begin{tikzpicture}[thick]
\draw (45:1) -- ++(-9:1) coordinate (a);
\draw (45:1) -- ++(99:1) coordinate (b);
\draw (135:1) -- ++(81:1) coordinate (c);
\draw (135:1) -- ++(189:1) coordinate (d);
\draw (225:1) -- ++(171:1) coordinate (e);
\draw (225:1) -- ++(-81:1) coordinate (f);
\draw (315:1) -- ++(-99:1) coordinate (g);
\draw (315:1) -- ++(9:1) coordinate (h);
\draw (45:1) -- (h) -- (a) -- (315:1) -- cycle;
\draw (135:1) -- (e) -- (d) -- (225:1) -- cycle;
\draw (b) -- (c) -- (g) -- (f) -- cycle;
\draw (a) -- ++(63:1) coordinate (ab) -- (b);
\draw (b) -- ++(27:1) coordinate (ba) -- (a) -- cycle;
\draw (ab) -- (45:1) -- (ba);
\draw (c) -- ++(153:1) coordinate (cd) -- (d);
\draw (d) -- ++(117:1) coordinate (dc) -- (c) -- cycle;
\draw (cd) -- (135:1) -- (dc);
\draw (e) -- ++(-117:1) coordinate (ef) -- (f);
\draw (f) -- ++(-153:1) coordinate (fe) -- (e) -- cycle;
\draw (ef) -- (225:1) -- (fe);
\draw (g) -- ++(-27:1) coordinate (gh) -- (h);
\draw (h) -- ++(-63:1) coordinate (hg) -- (g) -- cycle;
\draw (gh) -- (-45:1) -- (hg);
\foreach \x in {a,b,c,d,e,f,g,h}{\filldraw (\x) circle (0.075);}
\foreach \x in {ab,ba,cd,dc,ef,fe,gh,hg}{\filldraw (\x) circle (0.075);}
\foreach \x in {45,135,225,315}{\filldraw[fill=white] (\x:1) circle (0.075);}
\draw[dotted] (-2.2,0) -- (2.2,0);
\draw[dotted] (0,-2.2) -- (0,2.2);
\end{tikzpicture}
\caption{The augmented independence graph $\tilde{G}(\mathcal I)$ where $\mathcal I$ is the unsatisfiable $2$-LIT $3$-SAT instance $(b\vee b\vee a)\wedge(\overline b\vee\overline b\vee a)\wedge(c\vee c\vee \overline a)\wedge(\overline c\vee\overline c\vee \overline a)$. Dotted lines separate clauses, and removing the white vertices reduces the chromatic number.}
\label{fig:SAT}
\end{figure}
\begin{proposition}The graph $G=\widetilde G(\mathcal I)$ satisfies $\Delta(G)=3m$ and $\chi(G)=\omega(G)=2m$ but $\vs_\chi(G)=\vs_\omega(G)<\ivs_\omega(G)\leq\ivs_\chi(G)$.\end{proposition}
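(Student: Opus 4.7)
The approach is to compute $\Delta$, $\omega$, and $\chi$ of $G=\widetilde G(\mathcal I)$ directly from the SAT structure, and then read off the stability numbers using Lemma~\ref{lem:removal} and the unsatisfiability of $\mathcal I$. Throughout, write $k$ for the number of clauses of $\mathcal I$.

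For the basic parameters: a literal vertex has $2m-2$ within-clause literal neighbours, $2$ auxiliary neighbours, and at most $m$ cross-clause neighbours (one per occurrence of the complementary literal), so $\Delta(G)\le 3m$; equality is realised by the constructed $\mathcal I$, since each new variable $b_x$ introduced in the recursion contributes $m$ copies of $b_x$ and $m$ copies of $\overline{b_x}$ spread across distinct clauses. Auxiliary vertices have degree $2m-1$, so $\Delta(G)=3m$. For $\omega$: any clique containing an auxiliary vertex lies in a single clause, hence has size at most $2m$; a clique of literals spans at most two clauses, because three pairwise adjacent vertices in distinct clauses would need pairwise complementary labels, impossible among three literals. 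Within one clause a literal-clique has size at most $2m-1$, and across two it has size at most $m+m=2m$. A clause together with one of its auxiliary vertices realises $K_{2m}$, so $\omega(G)=2m$. For $\chi$: a $2m$-colouring of $G(\mathcal I)$ exists by hypothesis, and in it each clause's $2m-1$ literals use $2m-1$ distinct colours, leaving one free colour that may be assigned to both non-adjacent auxiliary vertices of the clause. Hence $\chi(G)\le 2m$, with the reverse inequality coming from $\omega(G)=2m$.

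For the stability numbers, Lemma~\ref{lem:removal} applied to $G(\mathcal I)$ with the partition into clauses (each a $K_{2m-1}$) yields a set $S$ consisting of one literal per clause with $\chi(G(\mathcal I)\setminus S)\le 2m-1$. In $\widetilde G\setminus S$, the remaining $2m-2$ literals of each clause cover only $2m-2$ colours, so one colour is free for both sibling auxiliary vertices; hence $\vs_\chi(G)\le k$. Conversely, any vertex set reducing $\omega$ must intersect every clause, otherwise the untouched clause together with an auxiliary vertex retains a $K_{2m}$; so $\vs_\omega(G)\ge k$. Since $\chi(G)=\omega(G)$, any set reducing $\chi$ also reduces $\omega$, giving $\vs_\omega(G)\le\vs_\chi(G)$, and combining these three bounds forces $\vs_\chi(G)=\vs_\omega(G)=k$.

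It remains to prove $\ivs_\omega(G)>k$, which is where the unsatisfiability of $\mathcal I$ enters. An independent set $S$ of size $k$ whose removal reduces $\omega$ must contain exactly one vertex per clause; if that vertex were an auxiliary, the sibling auxiliary together with all $2m-1$ literals of the clause would still form $K_{2m}$, so every vertex of $S$ is a literal. Independence then rules out all complementary pairs, so setting each literal in $S$ to true yields a satisfying truth assignment for $\mathcal I$, contradicting unsatisfiability. Finally, $\ivs_\omega(G)\le\ivs_\chi(G)$ follows again from $\chi(G)=\omega(G)$. The main subtlety is the clique analysis behind $\omega(G)=2m$, which relies on the case split between cliques containing an auxiliary vertex and those containing only literals; everything else is a routine consequence of Lemma~\ref{lem:removal} and the properties of $\mathcal I$.
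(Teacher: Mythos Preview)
Your proof is correct and follows essentially the same approach as the paper's: both use Lemma~\ref{lem:removal} for the upper bound $\vs_\chi(G)\le k$, the disjoint auxiliary-clique pairs for the lower bound $\vs_\omega(G)\ge k$, and unsatisfiability of $\mathcal I$ to rule out an independent transversal of the clauses for $\ivs_\omega(G)>k$. Your version is in fact a little more complete than the paper's, since you give an explicit case analysis for $\omega(G)=2m$ (which the paper leaves implicit) and spell out the chain $\vs_\omega(G)\le\vs_\chi(G)$ and $\ivs_\omega(G)\le\ivs_\chi(G)$ via $\chi=\omega$.
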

\begin{proof}Each vertex in $G(\mathcal{I})$ has $2m-2$ neighbours in the same clause and at most $m$ other neighbours corresponding to the complementary literal, with most vertices having exactly this many. (A vertex has fewer neighbours only if it corresponds to $a$ or $\overline a$ and $m$ is not a power of $2$.) In $G$, each degree increases by $2$, and the new vertices have degree $2m-1$, so $\Delta(G)=3m$. We have already established that $\chi(G(\mathcal I))=2m$, and we may greedily extend a $2m$-colouring of $G(\mathcal I)$ to one of $G$ since each uncoloured vertex has degree $2m-1$.

By Lemma \ref{lem:removal}, we can remove a set $S$ consisting of one vertex from each clause of $G(\mathcal I)$ to reduce its chromatic number. We can extend a $(2m-1)$-colouring of $G(\mathcal I)-S$ to one of $G-S$, since each uncoloured vertex only has $2m-2$ neighbours in $V(G)\setminus S$. Thus $\vs_\chi(G)=\abs S$.

Note that each vertex in $V(G)\setminus V(G(\mathcal I))$ is in a clique of order $2m$ in $G$. There are $2\abs S$ such cliques, each meeting exactly one other. Thus the only way to remove $\abs{S}$ vertices from $G$ and reduce the clique number is to remove one vertex from each of the $\abs S$ intersections, \ie one vertex from each clause of $G(\mathcal I)$. However, since $\mathcal I$ is unsatisfiable, there is no independent set of this form. Thus $\ivs_\chi(G)\geq\ivs_\omega(G)>\abs S$.
\end{proof}

\section{Wrapping up}\label{sec:wrapup}

\begin{proof}[Proof of Theorem~\ref{thm:summary}]
    For the first part, by~\cite[Theorem~1]{ABKM22} we know $f(\Delta)\le \Delta$.
    For $\Delta \le 10$, we have constructed graphs with $\vs_\chi(G)<\ivs_\chi(G)$ and $\chi(G)=\Delta(G)-1$ for $\Delta \in \{3,4\}$ in Section~\ref{sec:examples_with_vs_omega=vs_chi} (Proposition~\ref{prop:constr1}) and for $5 \le \Delta \le 10$ in Proposition~\ref{prop:chi+sqrtchi_bound}. The construction for $\Delta=5$, which is the minimal case giving a negative answer to Question \ref{problem-half}, has been depicted separately in Figure~\ref{fig:5432graph}.

    The second part is due to our construction in Proposition~\ref{prop:chi+sqrtchi_bound}.
    A small computation verifies that for $\chi=\Delta-k_{\Delta}$ (note that $k_{\Delta}\ge 1$ when $\Delta>10$), we have $\chi+ \ceilfrac{\chi}{\ceil{\sqrt \chi}}-2 \le \Delta.$
    The latter is equivalent to showing that 
    \[\Delta - k_{\Delta} \le (k_\Delta +2)\bceil{\sqrt{\Delta - k_{\Delta}}}.\]
    If $\sqrt{\Delta - k_{\Delta}} \le k_\Delta +2,$ this is clear.
    By definition of $k_{\Delta}$ we have $ \Delta < (k_\Delta +2)(k_\Delta +3)$ and so the statement is again true when $\ceil{\sqrt{\Delta - k_{\Delta}}}\ge k_\Delta +3$.
    
    For every $\Delta$ for which $(k+1)(k+2) \le \Delta \le k^2+4k+1$ for some positive integer $k\ge 1$, and there are infinitely many such $\Delta$, we have $(k+1)^2<\Delta-k+1 \le (k+1)(k+2).$
    This implies that for $\chi=\Delta-k_{\Delta}+1$ we have
    $\chi+ \ceilfrac{\chi}{\ceil{\sqrt \chi}}-2 \le \Delta$ since $\chi \le (k_\Delta+1)\ceil{\sqrt \chi}.$
    
    The third part is proven in Theorem~\ref{thr:main}.
\end{proof}

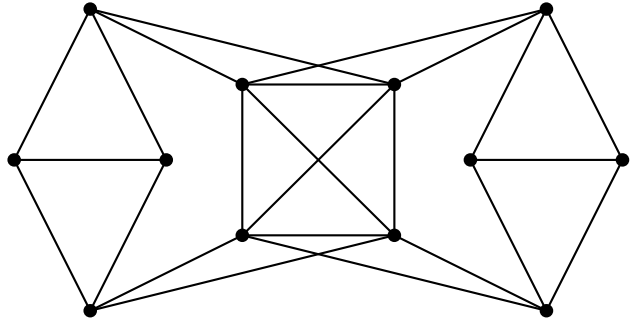
\begin{figure}[ht]
    \centering
\begin{tikzpicture}[thick]
\draw (-2,0) -- (-1,-2) -- (-2,-4) -- (0,-3);
\draw (-2,-4) -- (2,-3) -- (4,-4) -- (5,-2) -- (4,0) -- (3,-2) -- (4,-4);
\draw (3,-2) -- (5,-2);
\draw (4,0) -- (0,-1);
\draw (4,0) -- (2,-1);
\draw (-2,0) -- (0,-1);
\draw (-2,0) -- (2,-1);
\draw (-2,0) -- (-3,-2) -- (-2,-4);
\draw (-3,-2) -- (-1,-2);
\draw (0,-1) -- (0,-3);
\draw (2,-1) -- (2,-3) -- (0,-3);
\draw (0,-1) -- (2,-1) -- (0,-3) -- (4,-4);
\draw (0,-1)-- (2,-3);
\filldraw (0,-1) circle (0.075);
\filldraw (2,-1) circle (0.075);
\filldraw (2,-3) circle (0.075);
\filldraw (0,-3) circle (0.075);
\filldraw (-2,-4) circle (0.075);
\filldraw (4,-4) circle (0.075);
\filldraw (4,0) circle (0.075);
\filldraw (-2,0) circle (0.075);
\filldraw (-1,-2) circle (0.075);
\filldraw (-3,-2) circle (0.075);
\filldraw (3,-2) circle (0.075);
\filldraw (5,-2) circle (0.075);
\end{tikzpicture}
    \caption{A graph $G$ with $\Delta(G)=5, \chi(G)=4, \ivs_\chi(G)=3$ and $\vs_\chi(G)=2$, using the general construction from Figure \ref{fig:constructionD-o(D)}}
    \label{fig:5432graph}
\end{figure}

\paragraph{Acknowledgement.} This work was carried out while SC was affiliated with Warwick University and the Institute for Basic Science (South Korea), JH with Oxford University, and RJK with Radboud University Nijmegen.

\paragraph{Open access statement.} For the purpose of open access,
a CC BY public copyright license is applied
to any Author Accepted Manuscript (AAM)
arising from this submission.

\bibliographystyle{abbrv}
\bibliography{references}

\end{document}